\documentclass[draft]{article}
\usepackage{amsmath,amsfonts,latexsym, amssymb,amsrefs}
\usepackage{dsfont}
\usepackage{stmaryrd}
\usepackage{enumerate}

\usepackage{color}

\oddsidemargin=0in
\evensidemargin=0in
\textwidth=6.5in

\newcommand{\ii}{{\mathrm{i}}}
\DeclareMathOperator{\OO}{O}
\DeclareMathOperator{\oo}{o}
\newcommand{\var}{{\mathrm{var}}}
\newcommand{\wt}{\widetilde}

\newcommand{\e}{\varepsilon}

\newcommand{\rd}{{\rm d}}
\newcommand{\bR}{{\mathbb R}}

\newcommand{\tgamma}{{\widetilde \gamma}}

\renewcommand{\L}{\mbox{L}}

\renewcommand\Im{\operatorname{Im}}

\newcommand{\bv}{{\bf{v}}}
\newcommand{\bw}{{\bf{w}}}

\newcommand{\bG}{{\bf {G}}}

\newcommand{\bla}{\mbox{\boldmath $\lambda$}}

\newcommand{\al}{\alpha}

\newcommand{\be}{\begin{equation}}
\newcommand{\ee}{\end{equation}}

\newcommand{\cQ}{{\mathcal Q}}

\newcommand{\cR}{{\mathcal R}}

\newcommand{\E}{{\mathbb E }}
\newcommand{\R}{{\mathbb R }}

\renewcommand{\P}{{\mathbb P}}

\newcommand{\ind}{{\,\mathrm{d}}}

\newtheorem{theorem}{Theorem}

\newtheorem{lemma}[theorem]{Lemma}
\newtheorem{proposition}[theorem]{Proposition}
\newtheorem{definition}[theorem]{Definition}

\newcommand{\qed}{\hfill\fbox{}\par\vspace{0.3mm}}
\newenvironment{proof}{{\bf Proof.}} {\hfill\qed}

\numberwithin{equation}{section}
\numberwithin{theorem}{section}
\numberwithin{remark}{section}


\usepackage{geometry}     
\geometry{letterpaper}

\usepackage{graphicx}

\DeclareGraphicsRule{.tif}{png}{.png}{`convert #1 `dirname #1`/`basename
#1 .tif`.png}

\numberwithin{equation}{section}
\textheight=20cm
\textwidth=15cm
\oddsidemargin=0.5cm
\evensidemargin=0.5cm
\def\cal{}
\def\RR{{\mathbb R}}

\def\ZZ{{\mathbb Z}}

\def\NN{{\mathbb N}}
\def\CC{{\mathbb C}}

\def\cH{{\mathcal H}}

\def\W2{W^{1,2}({\cal O}(M))}

\def\1half{\frac{1}{2}}

\setcounter{tocdepth}{1}

\title{Bulk Universality of General $\beta$-Ensembles with Non-convex Potential}

\date{Aug 21, 2012}

\author{Paul Bourgade${}^1$\quad
L\'aszl\'o Erd\H os${}^2$\thanks{Partially supported
by SFB-TR 12 Grant of the German Research Council} \quad
Horng-Tzer Yau${}^1$\thanks{Partially supported
by NSF grants DMS-0757425, 0804279}
 \\\\
Department of Mathematics, Harvard University\\
Cambridge MA 02138, USA \\   bourgade@math.harvard.edu \quad
 htyau@math.harvard.edu ${}^1$ \quad  \\ \\
Institute of Mathematics, University of Munich, \\
Theresienstrasse 39, D-80333 Munich, Germany \\ lerdos@math.lmu.de ${}^2$
\\}

\begin{document}

\maketitle

\vspace{0.7cm}
\begin{center}
{\it \large Dedicated to Elliott H. Lieb on the occasion of his 80th birthday}
\end{center}
\vspace{0.7cm}

\begin{abstract}
We prove the bulk  universality of the $\beta$-ensembles with non-convex
regular analytic potentials for any $\beta>0$. This removes the convexity
assumption appeared in the earlier work \cite{BouErdYau2011}.  The
convexity condition  enabled us  to use the  logarithmic Sobolev inequality to estimate
events with small probability.  The  new idea
is to introduce  a ``convexified measure''
so that the local statistics are preserved under this convexification.
\end{abstract}

\vspace{1.5cm}

{\bf AMS Subject Classification (2010):} 15B52, 82B44

\medskip

\medskip

{\it Keywords:} $\beta$-ensembles, universality,  log-gas.

\medskip

\centerline{\date}

\newpage

\section{Introduction and  the main results}

The  classical invariant ensembles of  random matrices are given by   probability  measures
of the form $e^{- N \beta  { \rm Tr} V(H)/2 }$
where $N$ is the size of the matrix $H$ and $V$ is a real valued potential.
The parameter  $\beta=1,2,4$ is determined by the symmetry type of the matrix,
corresponding respectively to the classical orthogonal, unitary  or symplectic ensemble.
Let $\lambda = (\lambda_1, \lambda_2, \ldots ,\lambda_N)\in \Sigma_N$ be the
eigenvalues of $H$ in increasing order, where $\Sigma_N\subset \RR^N$ denotes the simplex
determined by $\lambda_1\le \lambda_2\le \ldots \le\lambda_N$.
It is well-known that the probability distribution of the ordered eigenvalues on
 $\Sigma_N$ is given by
\begin{equation}\label{eqn:measure}
\mu^{(N)}_{\beta, V}=\mu^{(N)}(\rd\lambda)=\frac{1}{Z_N}
e^{- \beta N \cH(\lambda)}\rd\lambda,
\qquad
\cH(\lambda) =   \sum_{k=1}^N  \frac{1}{2}V(\lambda_k)-
\frac{1}{N} \sum_{1\leq i<j\leq N}\log (\lambda_j-\lambda_i) .
\end{equation}
For non-classical values of $\beta > 0$, i.e., $\beta \not \in \{1, 2, 4\}$,
one can still consider the measure (\ref{eqn:measure}) on $\Sigma_N$,
 but in general there is no simple  natural matrix model producing this measure
except for the Gaussian case, $V(x)=x^2$, which corresponds to  a tri-diagonal  random matrix
\cite{DE,VV2009}.
  We will view $\mu=\mu^{(N)}$ as a Gibbs measure of particles
in $\bR$  with a logarithmic interaction, where the parameter $\beta > 0 $
is interpreted as the inverse temperature.  We will
refer to the variables $\lambda_j$ as particles or points
and the system is called log-gas  or general $\beta$-ensemble.

The universality conjecture asserts that the eigenvalue gap distributions
 in the bulk  depend only on $\beta$
and are independent of the potential $V$. For classical ensembles, the eigenvalue correlation functions
can be explicitly expressed
in terms of polynomials orthogonal to the measure $e^{- \beta V(x)/2}$.
Thus the analysis of the correlation functions
relies heavily on the asymptotic properties of the  corresponding orthogonal polynomials.
This approach, initiated by Dyson, Gaudin and Mehta (see \cite{Meh1991} for a review)
  was the starting point for
all results on classical universality.
Precise analysis on  orthogonal polynomials for general class of weight functions
was made possible by the Riemann-Hilbert approach
\cite{BleIts1999, DeiKriMcLVenZho1999I, DeiKriMcLVenZho1999II}.
  There are also methods independent of
the Riemann-Hilbert  approach, see, e.g.,
\cite{PasShc1997, PasShc2008,  Lub2009}.
The  universality for $\beta=2$ was proved for very general potential.
For $\beta = 1, 4$ \cite{DeiGio2009, KriShc2011, Shc2011}
it was proved for  analytic $V$ with some additional conditions.
 A summary of recent developments can be found in
\cite{AndGuiZei2010, Dei1999, DeiGio2009,PasShc2011}.

For non-classical values of $\beta$, i.e., $\beta \not \in \{1, 2, 4\}$,
there is no simple expression of the correlation functions in
terms of orthogonal polynomials.  In \cite{BouErdYau2011}, we initiated a new approach to prove
bulk universality for all $\beta > 0$ and strictly convex $V$.
The method was  based on estimating correlation functions by local Dirichlet form
and the main ingredients consist of the following  two steps:

\medskip
\noindent
{\it Step 1.  Rigidity of eigenvalues.}
This establishes that the location of the eigenvalues  are not too far
from their classical locations  determined by the equilibrium density $\rho(s)$.

\medskip
\noindent
{\it Step 2.  Uniqueness of local Gibbs measures with logarithmic interactions.}
With the precision of eigenvalue location estimates from  Step 1 as an input,
the eigenvalue gap distributions are
 shown to be  given by the corresponding
Gaussian ones. (We will take the uniqueness of the gap
distributions as our definition of the uniqueness of Gibbs state.)

Our goal is to extend this result to the non-convex case.
It was emphasized in \cite{BouErdYau2011} that the convexity of the potential $V$ was used only in Step 1.
So in order to apply this method, it suffices to prove the rigidity estimate which
we now introduce.

We will assume that the potential $V$ is  real analytic function   in $\RR$  such that
its second derivative is bounded below, i.e. we have
\begin{equation}\label{eqn:LSImu}
\inf_{x\in\RR}V''(x) \ge -2W
\end{equation}
for some constant $W\ge 0$, and
\begin{equation}\label{eqn:GrowthCondition}
V(x)> (2 + \alpha)\ln(1+|x|),
\end{equation}
for some\ $\alpha > 0$,
if $|x|$ is large enough.
It is known \cite{BouPasShc1995} that under these (in fact, even weaker) conditions
the measure is normalizable, $Z_N<\infty$. Moreover,
the averaged density of the empirical spectral measure, defined as
$$
   \rho^{(N)}_1(\lambda)= \rho_1^{(N,\beta,V)}(\lambda) : = \E_{\mu^{(N)}} \frac{1}{N}\sum_{j=1}^N \delta(\lambda-\lambda_j)
$$
converges weakly to a continuous function $\rho$, the equilibrium density, with compact support.
We additionally assume that   $\rho(s)$  is supported on a single interval $[A,B]$, and that $V$
 is {\it regular} in the sense of \cite{KuiMcL2000}. We recall that $V$ is regular if
its equilibrium  density $\rho$ is positive on $(A,B)$
 and vanishes like a square
root\footnote{This is not a strong constraint:
\cite{KuiMcL2000} proves that the regular potentials $V$ are a dense and open subset of the potentials
for the topology induced by the distance
$$
d(V,W)=\sum_{j=0}^3\sum_{k=1}^\infty 2^{-k}\frac{\|V^{(j)}-W^{(j)}\|_{\L^{\infty}[-k,k]}}
{1+\|V^{(j)}-W^{(j)}\|_{\L^{\infty}[-k,k]}}
+
\sum_{k=1}^\infty2^{-k}\frac{|G_k(V)-G_k(W)|}{1+|G_k(V)-G_k(W)|},
$$
where $G_k(V)=\inf_{|x|>k}V(x)/\log|x|$.} at each of the endpoints of $[A,B]$, that is
\begin{align}\label{sqsing}
\rho(t)&=s_A\sqrt{t-A}\left(1+\OO\left(t-A\right)\right),\ t\to A^+,\\
\rho(t)&=s_B\sqrt{B-t}\left(1+\OO\left(B-t\right)\right),\ t\to B^-, \nonumber
\end{align}
for some constants $s_A,\, s_B>0$.

In this paper, we are interested in the usual $n$-point correlation functions,
generalizing $\rho_{1}^{(N)}$,
and  defined by
\begin{equation}\label{eqn:corrFunct}
\rho^{(N)}_n(x_1,\ldots,x_n)=
\int_{\RR^{N-n}}\tilde\mu(x)\rd x_{n+1}\dots\rd x_{N},
\end{equation}
where $\tilde \mu$ is the  symmetrized version of $\mu$ given in \eqref{eqn:measure}
but defined  on $\RR^N$  instead of the simplex $\Sigma_N$:
$$
\tilde\mu^{(N)}(\rd\lambda)=\frac{1}{N!}\mu(\rd\lambda^{(\sigma)}),
$$
where
$\lambda^{(\sigma)}=(\lambda_{\sigma(1)},\dots,\lambda_{\sigma(N)})$, with
$\lambda_{\sigma(1)}<\dots<\lambda_{\sigma(N)}$.

In the following, we omit the superscript $N$ and we will write $\mu$ for $\mu^{(N)}$.
We will use $\P_\mu$ and $\E_\mu$ to denote the probability and the
expectation with respect to $\mu$.
Let the {\it classical position}  $\gamma_k$ be
defined by
\begin{equation}\label{gammadef}
 \int_{-\infty}^{\gamma_k}\rho(s)\rd s=\frac{k}{N}.
\end{equation}
Finally, we introduce the notation $\llbracket p,q\rrbracket = [p,q]\cap \ZZ$
for any  real numbers $p<q$.

It is known that the particles are rigid, i.e. they
 cannot be far from their classical
locations\footnote{
For eigenvalues in the bulk, (\ref{eqn:largDev1}) follows from the large
 deviations for the empirical spectral measure with speed $N^2$
\cite{BenGui1997,AndGuiZei2010}, and for the extreme eigenvalues the large deviations principle
with speed $N$
was proved in \cite{AndGuiZei2010}, Theorem 2.6.6, up to a condition on the partition function
that follows from Theorem 1 (iii) in \cite{Shc2011}.}: for any
 $\e>0$ there are positive constants $c_1$, $c_2$ such that, for all $N\geq 1$,
\begin{align}
\P_\mu\left( \exists k\in\llbracket1,N\rrbracket\mid
 | \lambda_k-  \gamma_k| \ge \e \right)\leq c_1 e^{-c_2 N}. \label{eqn:largDev1}
\end{align}
The main technical result of this paper
 is to prove that rigidity holds
for the measure $\mu$ at the optimal scale $1/N$ in the bulk in the following sense.
This theorem extends  our rigidity result in \cite{BouErdYau2011} to non-convex potential  $V$.

\begin{theorem}[Rigidity estimate in the bulk]\label{thm:rigidity} Let $V$ be  real analytic,
regular with equilibrium density
supported on a single interval $[A,B]$,  and satisfy  (\ref{eqn:LSImu}), (\ref{eqn:GrowthCondition}).
Take any $\alpha>0$ and $\e>0$. Then there are constants
$\delta,c_1,c_2>0$ such that for any $N\geq 1$ and $k\in\llbracket \alpha N,(1-\alpha) N\rrbracket$,
\begin{equation}\label{bulkrig}
\P_\mu\left(|\lambda_k-\gamma_k|> N^{-1+\e}\right)\leq c_1e^{-c_2N^\delta}.
\end{equation}
\end{theorem}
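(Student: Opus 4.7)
The plan is to reduce Theorem \ref{thm:rigidity} to its convex-potential analogue in \cite{BouErdYau2011} by constructing an auxiliary convexified Gibbs measure $\widetilde\mu$ whose local behaviour is comparable to that of $\mu$. The input is the coarse-scale large deviation bound (\ref{eqn:largDev1}), which confines all particles to a neighbourhood $I_{\e_0}=[A-\e_0,B+\e_0]$ of $\supp\rho$ with probability at least $1-c_1 e^{-c_2 N}$. On this confining event the potential $V$ is only felt on $I_{\e_0}$, so one has freedom to redefine $V$ elsewhere.

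In \cite{BouErdYau2011} the improvement from the coarse scale to the optimal scale $N^{-1+\e}$ is driven by a logarithmic Sobolev inequality for $\mu$, whose validity hinges on global convexity of the Hamiltonian $\cH$ in (\ref{eqn:measure}). For $V''\ge -2W$ with $W>0$, a direct Holley--Stroock perturbation produces an LSI constant of order $e^{c W N}$, useless at exponential scales. To circumvent this I would introduce a modified potential $\widetilde V$ with $\widetilde V''\ge 0$ globally, chosen so that: (i) $\widetilde V$ is regular with the same equilibrium density $\rho$ on $[A,B]$, so that in particular the classical positions $\widetilde\gamma_k$ for $\widetilde\mu$ coincide with $\gamma_k$; and (ii) $\mu$ and $\widetilde\mu$ essentially agree on the confining event. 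A natural first attempt is to set $\widetilde V = V$ on $I_{\e_0}$ and modify $V$ only outside to guarantee global convexity; changes outside $\supp\rho$ do not perturb the equilibrium problem so long as the Euler--Lagrange inequality is preserved off the support. If $V$ is already convex on $I_{\e_0}$ this construction suffices at once. In the general case $V$ may have regions of concavity inside $\supp\rho$ itself, and the convexification then has to be performed via a more delicate construction in which the non-trivial correction is supported on an event of very small $\mu$-probability.

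With $\widetilde\mu$ so constructed, the convex-case rigidity of \cite{BouErdYau2011} applies and yields (\ref{bulkrig}) with $\mu$ replaced by $\widetilde\mu$; transferring the bound back to $\mu$ uses control of the Radon--Nikodym derivative $\d\mu/\d\widetilde\mu$ on the confining event from (\ref{eqn:largDev1}), together with the equality $\widetilde\gamma_k=\gamma_k$. I expect the main obstacle to lie in step (ii) of the construction: producing a globally convex $\widetilde V$ while keeping the law on the bulk unchanged up to errors of order $e^{-c N^\delta}$. When $V$ has pockets of concavity inside $\supp\rho$, global convexity and local equivalence of $\mu$ and $\widetilde\mu$ are in genuine tension, and resolving this tension via an appropriate modification of the measure is the technical novelty announced in the abstract.
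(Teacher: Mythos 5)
There is a genuine gap, and it sits precisely at the point you flag as ``the technical novelty'': your plan to convexify by modifying the one-body potential $V$ cannot work when $V$ has regions of concavity inside $\supp\rho$. Any modification $\widetilde V\neq V$ on a subinterval of $(A,B)$ where $\rho>0$ changes the Hamiltonian by an amount of order $N$ times the number of particles in that subinterval, i.e.\ by order $N^2$ in the exponent with probability one (not on a small-probability event), since a positive fraction of the particles lie there. This destroys both the equilibrium measure and any hope of controlling $\rd\mu/\rd\widetilde\mu$; there is no ``delicate construction supported on an event of very small $\mu$-probability'' available for a one-body correction, because a one-body correction is felt typically, not rarely. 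Modifying $V$ off the support is harmless but also useless, since the obstruction to convexity is inside the support.

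The paper's resolution is structurally different: one leaves $V$ alone and instead exploits the convexity of the two-body logarithmic interaction. Its Hessian contribution $\frac{1}{N}\sum_{i<j}(v_i-v_j)^2/(\lambda_i-\lambda_j)^2$ is bounded below (with overwhelming probability, using the crude rigidity \eqref{eqn:largDev1}) by an explicit nonnegative quadratic form $\cQ$ built from a circulant matrix, and $\cQ$ dominates $W+1$ except on a \emph{finite-dimensional} space of slow modes. Only these finitely many directions need to be penalized, and this is done by adding $(W+1)\sum_{\al=1}^\ell X_\al^2$ where $X_\al=N^{-1/2}\sum_j(g_\al(\lambda_j)-g_\al(\tgamma_j))$ are centered linear statistics. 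The crucial point is that by the loop-equation bound on linear statistics (Lemma \ref{lem:LinStat}), $NX_\al^2=\OO((\log N)^2)$ typically, so the added term shifts $\log Z$ by only $\OO((\log N)^2)$ and the convexified measure $\nu$ has the same subexponentially small events as $\mu$ (Lemma \ref{lem:equivalence}); this is what replaces your Radon--Nikodym transfer. One then runs the LSI/concentration argument for $\nu$, obtains the initial scale $N^{-1/2}$, and bootstraps to $N^{-1+\e}$ by the multiscale iteration. Your proposal correctly identifies the obstruction and the overall reduction strategy, but the mechanism you propose to remove the non-convexity is the one step that fails, and the actual mechanism (many-body convexification through the interaction plus a finite number of linear-statistic penalties) is the content you would still need to supply.
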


 Our main result on the universality is the following theorem:

\begin{theorem}[Bulk universality]\label{thm:Main} Let $V$ be  real analytic,
regular with equilibrium density
supported on a single interval $[A,B]$,  and satisfy  (\ref{eqn:LSImu}), (\ref{eqn:GrowthCondition}).
 Then for any $\beta>0$ the bulk universality holds for the
  $\beta$-ensemble $\mu=\mu_{\beta, V}$.
 More precisely, for any $E\in (A,B)$ and $|E'|<2$,
for any smooth test functions $O$ with compact support  and for any $0<k\le \frac{1}{2}$,
we have,  with $s:=N^{-1+k}$, that
\begin{align*}
\lim_{N \to \infty} \int  & \rd \alpha_1 \cdots \rd \alpha_n\, O(\alpha_1,
\dots, \alpha_n) \Bigg [
  \int_{E - s}^{E + s} \frac{\rd x}{2 s}  \frac{1}{ \rho (E)^n  }  \rho_n^{(N)}   \Big  ( x +
\frac{\alpha_1}{N\rho(E)}, \dots,   x + \frac{\alpha_n}{N\rho(E)}  \Big  ) \\
&
-   \int_{E' - s}^{E' + s} \frac{\rd x}{2 s}  \frac{1}{\rho_{sc}(E')^n}
\rho_{{\rm Gauss}, n}^{(N)}
   \Big  ( x +
\frac{\alpha_1}{N\rho_{sc}(E')}, \dots,   x + \frac{\alpha_n}{N\rho_{sc}(E')}  \Big  ) \Bigg ]
\;=\; 0\, .
\end{align*}
Here $\rho_{sc}(E)=\frac{1}{2\pi}\sqrt{4-E^2}$ is the Wigner semicircle law
and $\rho_{{\rm Gauss}, n}^{(N)}$ are the correlation functions of the Gaussian $\beta$-ensemble, i.e.
with $V(x)=x^2$.
\end{theorem}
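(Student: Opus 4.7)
The plan is to deduce Theorem~\ref{thm:Main} from the rigidity estimate in Theorem~\ref{thm:rigidity} by invoking the second step of the two-step strategy developed in \cite{BouErdYau2011}. As emphasized already in the introduction, the convexity of $V$ was used in \cite{BouErdYau2011} only to establish the rigidity bound (Step 1); the subsequent analysis of local Gibbs measures (Step 2) takes rigidity as a black-box input and then proceeds via purely local arguments that are insensitive to the global shape of $V$. Once Theorem~\ref{thm:rigidity} supplies rigidity at the optimal scale $N^{-1+\e}$ for non-convex regular potentials, the remainder of the proof of universality is essentially a direct quotation of the machinery of \cite{BouErdYau2011}.

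Concretely, the argument proceeds as follows. First, a union bound over $k \in \llbracket \alpha N, (1-\alpha) N\rrbracket$ in Theorem~\ref{thm:rigidity} yields simultaneous rigidity for all bulk particles with overwhelming probability. Next, for a mesoscopic window of roughly $K = N^{k}$ consecutive particles around a bulk point $E\in(A,B)$, one conditions on the configuration outside the window and studies the resulting local Gibbs measure. The conditional Hamiltonian is strictly convex because the logarithmic repulsion is convex in every direction, and at the scale of the window the external potential $V$ is a small perturbation whose second derivative, bounded below by $-2W$ via (\ref{eqn:LSImu}), is absorbed by the logarithmic interaction; rigidity ensures that the boundary data are well-controlled. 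One then constructs a reversible interpolating dynamics between the local measure for $\mu_{\beta,V}$ and the corresponding local measure for the Gaussian $\beta$-ensemble and exploits a logarithmic Sobolev inequality together with a local Dirichlet form estimate to show that the two local measures have asymptotically identical rescaled gap statistics. Rescaling by $\rho(E)$ on one side and by $\rho_{sc}(E')$ on the other, integrating against the test function $O$, and averaging over the window of size $s=N^{-1+k}$ then gives the vanishing claimed in the theorem.

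Since the reduction of Step 2 to rigidity has already been carried out in \cite{BouErdYau2011}, there is no essentially new obstacle in proving Theorem~\ref{thm:Main} once Theorem~\ref{thm:rigidity} is in hand; the entire novelty and technical difficulty of the present paper lies in establishing the non-convex rigidity estimate, which is the content of Theorem~\ref{thm:rigidity} and its proof. The conceptual reason Step 2 is robust is that on a mesoscopic interval the local Hamiltonian inherits strict convexity from the logarithmic repulsion alone, so the entropy and Dirichlet form decay arguments proceed uniformly in the global convexity of $V$, provided one has rigidity to guarantee that the effective boundary geometry of the window is regular.
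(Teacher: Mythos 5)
Your proposal is correct and follows exactly the route the paper takes: Theorem~\ref{thm:Main} is deduced by combining the rigidity estimates \eqref{eqn:largDev1} and \eqref{bulkrig} with the uniqueness of the local Gibbs measure (Theorem 2.1 and Corollary 2.2 of \cite{BouErdYau2011}), whose proof uses only rigidity as input and is therefore unaffected by the loss of convexity of $V$. The paper states this reduction in a few lines without reproducing the Step 2 machinery, and your additional sketch of that machinery is consistent with it.
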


Theorem \ref{thm:Main} follows immediately from the rigidity estimates, \eqref{eqn:largDev1},
\eqref{bulkrig},
and the uniqueness of local Gibbs measure, i.e.,  Theorem 2.1 and Corollary 2.2  in \cite{BouErdYau2011}.
We note that the proof of the latter results in Section 4 of \cite{BouErdYau2011}
uses only the rigidity estimate, given in Theorem 3.1 of \cite{BouErdYau2011}, as an input.
Once the rigidity estimate is proven, the rest of the argument is identical
and  we will not repeat it here.

The rest of this paper is devoted to the proof of Theorem \ref{thm:rigidity}.
After some initial estimates concerning the large deviations regime and global smooth linear statistics
(Section 2), the proof consists in the following steps. First we compare
$\mu$ to some {\it convexified measures} $\nu$ (Section 3); the Hamiltonian $\cH_\nu$ of $\nu$
differs from that of $\mu$ mainly by some properly chosen linear statistics of the $\lambda_i$'s,
allowing $\cH_\nu$ to be convex.  Despite this change in convexity, we will prove that
the two measures $\mu$
and $\nu$ have the same  subexponentially small probability events.
 This step is the main extra ingredient allowing one to generalize the rigidity estimate
obtained in \cite{BouErdYau2011}.
Then by a self-improving method,
this measure $\nu$ (together with $\mu$) is proved to have rigidity till the optimal scale,
thanks to comparisons with locally constrained versions of $\nu$ (Section 4).

\section{Preliminary results}

\subsection{Equilibrium measure, large deviations}

For  analytic potential  $V$ satisfying the asymptotic growth condition (\ref{eqn:GrowthCondition}),   the equilibrium measure $\rho(s)\rd s$ associated with  $(\mu^{(N)})_{N\geq 0}$
can be defined as the unique minimizer (in the set of probability measures on $\RR$ endowed with the weak topology) of the
functional
$$
I(\nu)=
\int V(t)\rd\nu(t)-
\iint\log|t-s|\rd\nu(s)\rd\nu(t)
$$
if $\int V(t)\rd\nu(t)<\infty$, and $I(\nu)=\infty$ otherwise.
Moreover, if one assumes that $\rho$ is supported on a single interval $[A,B]$ and regular in the sense of the previous section, $\rho$
has the following properties:
\begin{enumerate}[(a)]
\item This equilibrium measure satisfies
\be
   \frac{1}{2}V'(t) = \int \frac{\rho(s)\rd s}{t-s}.
\label{equilibrium}
\ee
for any $t\in(A,B)$.
\item For any $t\in[A,B]$,
\begin{equation}\label{eqn:rho}
\rho(t)\ind t=\frac{1}{\pi}r(t)\sqrt{(t-A)(B-t)}\mathds{1}_{[A,B]}\ind t,
\end{equation}
where $r$ can be extended into an analytic function in $\CC$ satisfying
\begin{equation}\label{eqn:r}
r(z)=\frac{1}{2\pi}\int_A^B\frac{V'(z)-V'(t)}{z-t}\frac{\rd t}{\sqrt{(t-A)(B-t)}}.
\end{equation}
\end{enumerate}

In order to have the density supported strictly in a compact interval,
for given $\kappa>0$,
define the following variant of $\mu^{(N)}$ conditioned to have all particles in $[A-\kappa,B+\kappa]$:
\begin{equation}\label{eqn:truncMeasure}
\mu^{(N,\kappa)}(\rd\lambda)=\frac{1}{Z_{N,\kappa}}
\prod_{1\leq i<j\leq N}|\lambda_i-\lambda_j|^\beta\prod_{k=1}^N e^{-N\frac{\beta}{2}V(\lambda_k)}\mathds{1}_{\lambda_k\in[A-\kappa,B+\kappa]}
\rd\lambda_1\dots\rd\lambda_N.
\end{equation}

 In this paper we will choose $\kappa$ to be small.
This choice differs from \cite{BouErdYau2011} where, instead of $[A-\kappa,B+\kappa]$, we restricted
the particles to $[-R,R]$ for a very large $R$. The smaller interval is needed here
because we need $r$ to be positive on the support of
$\mu^{(N,\kappa)}$ in the proof of Lemma \ref{lem:Johansson}.   Unlike in the case of convex $V$ where
 $r$ is known to have no real zero at all, for the non-convex regular case  we only know that $r$ is nonzero in the interval
$[A,B]$. By continuity, it is also nonzero in $[A-\kappa,B+\kappa]$ for some small $\kappa$.

Let  $\rho_k^{(N,\kappa)}$ denote the correlation functions of the measure $\mu^{(N,\kappa)}$.
Then Lemma 1 in \cite{BouPasShc1995} states that
under condition (\ref{eqn:GrowthCondition}), for some large enough $\kappa$ there exists
 some $c>0$, depending only on $V$,  such that for any $x_1,\dots,x_k\in[A-\kappa,B+\kappa]$, we have
\begin{equation}\label{eqn:BPS1}
\left|\rho^{(N,\kappa)}_k(x_1,\dots,x_k)-\rho^{(N)}_k(x_1,\dots,x_k)\right|
\leq \rho^{(N,\kappa)}_k(x_1,\dots,x_k)e^{-c N},
\end{equation}
and for $x_1,\dots,x_j\not\in[A-\kappa,B+\kappa]$, $x_{j+1},\dots,x_k\in[A-\kappa,B+\kappa]$,
\begin{equation}\label{eqn:BPS2}
\rho^{(N)}_k(x_1,\dots,x_k)\leq e^{-c N\sum_{i=1}^{j}\log |x_i|}.
\end{equation}
The estimates (\ref{eqn:BPS1}) and (\ref{eqn:BPS2})
actually also hold for arbitrarily small fixed $\kappa>0$
thanks to the large deviations estimates (\ref{eqn:largDev1}).

\subsection{Linear statistics}\label{subsec:LinStat}

The following lemma was essentially proven in \cite{Shc2011} (for the variance of linear statistics).
\begin{lemma}\label{lem:LinStat}
For any function $\phi$ with $
\|\phi\|_\infty+\|\phi'\|_\infty+\|\phi''\|_\infty<\infty$, there is a constant $c>0$ depending only on $V$ and $\phi$ (one can choose $c=\OO(
\|\phi\|_\infty+\|\phi'\|_\infty+\|\phi''\|_\infty)$) such that, for any
$N\geq 1$ and $s>0$,
$$
\P_\mu\left(\left|\sum_{i=1}^N\phi(\lambda_i)-N\int_{\RR}\rho(u)\phi(u)\rd u\right|>s\right)\leq e^{-c s/\log N}.
$$
\end{lemma}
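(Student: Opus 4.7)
The plan is to bound the exponential moment \(M(\xi) := \E_\mu[e^{\xi X_\phi}]\) for \(|\xi|\) in a small range and then apply Markov's inequality, where \(X_\phi := \sum_{i=1}^N\phi(\lambda_i) - N\int\phi(u)\rho(u)\rd u\). The standard device, due in this context to Johansson and refined by Shcherbina, is a tilt-and-integrate argument based on the fact that the ratio of partition functions encodes the Laplace transform.

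Introduce the perturbed measure \(\mu^{(\xi)}\) with potential \(V^{(\xi)} := V - \frac{2\xi}{N\beta}\phi\) and partition function \(Z_\xi\); then \(M(\xi) = (Z_\xi/Z_0)\,e^{-\xi N\int\phi\rho}\), so
\begin{equation*}
\frac{\rd}{\rd\xi}\log M(\xi) = \E_{\mu^{(\xi)}}\!\Big[\sum_{i=1}^N \phi(\lambda_i)\Big] - N\!\int\!\phi(u)\rho(u)\rd u.
\end{equation*}
For \(|\xi|\le\xi_0\) with \(\xi_0\) a small constant independent of \(N\), the potential \(V^{(\xi)}\) remains real analytic and regular, with one-cut equilibrium density \(\rho^{(\xi)}\) satisfying \(N(\rho^{(\xi)}-\rho) = \xi u_\phi + O(\xi^2/N)\), where \(u_\phi\) is determined by the principal-value singular integral equation \(\int u_\phi(s)/(t-s)\,\rd s = -\phi'(t)/\beta\) for \(t\in(A,B)\) together with \(\int u_\phi = 0\), and solved via the inverse finite Hilbert transform on \([A,B]\). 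In particular \(|N\int\phi\,(\rho^{(\xi)}-\rho)| \le C|\xi|(\|\phi\|_\infty+\|\phi'\|_\infty)\).

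The main step is to show
\begin{equation*}
\Big|\E_{\mu^{(\xi)}}\!\Big[\sum_i \phi(\lambda_i)\Big] - N\!\int\!\phi\,\rho^{(\xi)}\Big| \le C\log N
\end{equation*}
uniformly in \(|\xi|\le\xi_0\). This is obtained from the Dyson--Schwinger (loop) equation for \(\mu^{(\xi)}\): testing the translation invariance of the integral against \(1/(\lambda_i-z)\) and summing yields a closed identity for the Stieltjes transform \(m_N(z):=\frac{1}{N}\E_{\mu^{(\xi)}}\sum_i(\lambda_i-z)^{-1}\) of the schematic form \((m_N-m^{(\xi)})^2 + \mathcal{L}(m_N-m^{(\xi)}) = O(1/N)\), where \(\mathcal{L}\) is the linearization of the variational equation \eqref{equilibrium} at \(\rho^{(\xi)}\). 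Inverting \(\mathcal{L}\), which is possible because \(V^{(\xi)}\) is regular (the analogue of \(r\) in \eqref{eqn:r} is strictly positive on \([A^{(\xi)}-\kappa,B^{(\xi)}+\kappa]\) for small \(\xi\)), and bounding the nonlinear remainder using the large-deviation estimate \eqref{eqn:largDev1} together with the truncation bounds \eqref{eqn:BPS1}--\eqref{eqn:BPS2}, yields the stated bound; the \(\log N\) factor arises from boundary-layer behaviour of \(m_N - m^{(\xi)}\) near the endpoints \(A^{(\xi)},B^{(\xi)}\).

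Combining these two estimates yields \(\big|\tfrac{\rd}{\rd\xi}\log M(\xi)\big|\le C\log N\) for \(|\xi|\le\xi_0\), so \(\log M(\xi) \le C|\xi|\log N\) on that range. Choosing \(\xi = (c/\log N)\,\mathrm{sign}(s)\) with \(c\) small enough that \(|\xi|\le \xi_0\), applying Markov's inequality to \(e^{\xi X_\phi}\), and repeating for \(-X_\phi\), give the claimed tail bound \(\P_\mu(|X_\phi|>s)\le 2\exp(-cs/\log N)\). The main obstacle is the loop-equation estimate: inverting \(\mathcal{L}\) requires \(V^{(\xi)}\) to remain regular uniformly in \(\xi\), which is why one must restrict to \(|\xi|\le\xi_0\). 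In the non-convex setting one has to exploit continuity of the function \(r\) in \eqref{eqn:r} to ensure that regularity (positivity of \(\rho^{(\xi)}\) in the bulk and square-root vanishing at the edges) is preserved under the small perturbation \(V\mapsto V^{(\xi)}\), and that the endpoints \(A^{(\xi)},B^{(\xi)}\) depend smoothly on \(\xi\); this is where the non-convexity is felt, but only through soft continuity arguments.
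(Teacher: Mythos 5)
Your overall architecture coincides with the paper's: both proofs bound the log-Laplace transform of the centred linear statistic by differentiating in the tilt parameter, which produces the expectation of $\sum_i\phi(\lambda_i)-N\int\phi\rho$ under a tilted measure; both control that expectation by $\OO(\log N)$ via loop-equation technology; and both conclude with Markov's inequality at tilt strength of order $1/\log N$. The paper imports the $\OO(\log N)$ bound for Stieltjes transforms from Shcherbina \cite{Shc2011} (her (2.22), valid for measures tilted by $V\mapsto V+h/N$) and upgrades it to general $C^2$ test functions by a Helffer-Sj\"ostrand argument in the spirit of Lemma \ref{lem:HS}, whereas you propose to rerun the loop-equation analysis for the perturbed potential from scratch; these are interchangeable in principle. (Both arguments also need the preliminary reduction to compactly supported $\phi$ via the large deviation bounds, which you omit but which is routine.)

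There is, however, one concrete misstep: you assert that $V^{(\xi)}=V-\tfrac{2\xi}{N\beta}\phi$ ``remains real analytic and regular,'' but $\phi$ is only assumed to be $C^2$ with bounded derivatives, so $V^{(\xi)}$ is in general \emph{not} analytic. Analyticity of the potential is not decorative here: the mechanism for removing the term $b_N$ from the loop equation \eqref{eqn:firstLoop} --- and for defining the analytic continuation $r$ in \eqref{eqn:r}, whose nonvanishing near the support you invoke when ``inverting $\mathcal{L}$'' --- rests on contour integration of an analytic integrand. With a merely $C^2$ perturbation the analogue of $b_N$ is no longer analytic and the contour argument breaks. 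The standard repair (and what Shcherbina actually does) is to keep the analytic $V$ and its equilibrium density $\rho$ fixed and to treat the tilt as an extra, explicitly $\OO(1/N)$ source term in the loop equation for the tilted measure, rather than absorbing it into the potential and re-deriving a perturbed equilibrium measure $\rho^{(\xi)}$; the comparison is then made directly to $\rho$, and the separate $\OO(\xi)$ correction $N\int\phi\,(\rho^{(\xi)}-\rho)$ that you compute never needs to be introduced. With that modification your argument goes through and is essentially the paper's.
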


\begin{proof}
Without loss of generality, we can assume that $\phi$ is
compactly supported (thanks to large deviation estimates such as (\ref{eqn:BPS2})).
We know from Shcherbina, equation (2.22) in \cite{Shc2011}, that for the Stieltjes transforms, i.e.
$g(u)=1/(z-u)$, there is a constant $c>0$ depending only on $V$ and $g$ (one can choose $c=\OO(\|g^{(4)}\|_\infty)$) such that, for any
$N\geq 1$,
\begin{equation}\label{eqn:diffExp}
\left|\E_{\mu_h}\left(\sum_{i=1}^N g(\lambda_i)-N\int_{\RR}\rho(u)g(u)\rd u\right)\right|\leq c\log N,
\end{equation}
where $\mu_h$ is obtained by replacing $V$ by $V+\frac{h}{N}$ in the definition of
 $\mu$, and $h$ is for example
any $N$-independent smooth compactly supported function.
We will now prove that this implies that (\ref{eqn:diffExp})
actually holds when replacing $g$ by any smooth compactly supported $\phi$, for example by a
Helffer-Sj\"ostrand type
argument, similar to Lemma \ref{lem:HS}.
We can now apply formula (B.13) in \cite{ErdRamSchYau2010} for the
signed measure $\wt\rho=\rho^{(N,\mu_h)}_1-\rho$, with
Stieltjes transform $S$, where $\rho^{(N,\mu_h)}_1$ is the one-point correlation
function of $\mu_h$. We obtain
\begin{align}\label{eqn:HSbound1}
\left|\int_{-\infty}^\infty \phi(\lambda)\wt\rho(\lambda)\rd\lambda\right|
\leq&
C\left|\iint y \phi''(x)\chi(y)\Im S(x+\ii y)\rd x\rd y\right|\\
&+\label{eqn:HSbound2}
C\iint\left(|\phi(x)|+|y||\phi'(x)|\right)|\chi'(y)|\left|S(x+\ii y)\right|\rd x\rd y,
\end{align}
for some universal $C>0$, and
where $\chi$ is a smooth cutoff function with support in $[-1, 1]$, with $\chi(y)=1$ for $|y| \leq 1/2$ and with bounded derivatives.
Note that $\chi'$ is supported on $1/2<|y|<1$ and $\phi,\phi'$
on compact sets, and that $S$ is
uniformly $\OO\left(\frac{\log N}{N}\right)$ on this compact
integration domain, by (\ref{eqn:diffExp}), so
the term (\ref{eqn:HSbound2}) is easily bounded by
$\OO(\|\phi\|_\infty+\|\phi'\|_\infty)\frac{\log N}{N}$.
Concerning the term $(\ref{eqn:HSbound1})$, an easy calculation yields the bound
$\frac{\rd}{\rd y} (y\,\Im S)=\OO(1/y)$,
so integrating from 1 to $y$ we get $|y\,\Im S(x+\ii y)|=\OO(|\log y|)\frac{\log N}{N}$, which is integrable, so
$(\ref{eqn:HSbound1})$ is $\OO(\log N/N)$ as well, finally
proving that (\ref{eqn:diffExp}) holds when replacing $g$
by $\phi$.

Following now Lemma 1 in \cite{Shc2011}, consider
$$
Z_N(t)=\E_{\mu}\left(\exp\left(\frac{t}{\log N}\left(\sum_{i=1}^N\phi(\lambda_i)-N\int_{\RR}\rho(s)\phi(s)\rd s\right)\right)\right).
$$
Then obviously
$
\frac{\rd^2}{\rd t^2}\log Z_N(t)\geq 0
$, so
\begin{multline*}\log Z_N(t)=\log Z_N(t)-\log Z_N(0)\leq |t|\frac{\rd}{\rd t}\log Z_N(t)\\
=
\frac{|t|}{\log N}\E_{\mu_{t\phi/\log N}}\left(\sum_{i=1}^N\phi(\lambda_i)-N\int_{\RR}\rho(s)\phi(s)\rd s
\right),
\end{multline*}
so using (\ref{eqn:diffExp}) we get that $Z_N(t)\leq e^{c|t|}$, from which
Lemma \ref{lem:LinStat} easily follows.
\end{proof}

\subsection{Analysis of the loop equation}

This section analyzes the loop equation \eqref{eqn:firstLoop}
in the following Lemma \ref{lem:Johansson}.
Its proof is  very similar to \cite{BouErdYau2011} except that,  instead of
the logarithmic Sobolev inequality which was valid only for convex $V$, we will  use Lemma \ref{lem:LinStat}.
Furthermore, since the support
of the restricted measure $\mu^{(N,\kappa)}$ has changed,  the integration contours in \eqref{eqn:contour}
are  chosen slightly  differently from those in  \cite{BouErdYau2011}.

In the form presented here, we follow closely the proof in \cite{Shc2011}.
We now introduce some notations needed in the proof.
\begin{itemize}
\item $m_N$ is the Stieltjes transform of $\rho^{(N)}_1(s)\rd s$, evaluated at some $z$ with $\Im(z)>0$, and $m$ its
limit:
$$
m_N(z)=\E_\mu\left(\frac{1}{N}\sum_{k=1}^N\frac{1}{z-\lambda_i}\right)=\int_\RR\frac{1}{z-t}\rho_1^{(N)}(t)\rd t,\
m(z)=\int_\RR\frac{1}{z-t}\rho(t)\rd t.
$$

\item $s(z)=-2r(z)\sqrt{(A-z)(B-z)}$, where the square root is defined such that
$$
f(z)=\sqrt{(A-z)(B-z)}\sim z \quad \text{ as }  \quad z\to\infty;
$$
\item $b_N(z)$ is defined by
$$
b_N(z)=\int_{\RR}\frac{V'(z)-V'(t)}{z-t}(\rho_1^{(N)}-\rho)(t)\ind t;
$$
\item finally, $c_N(z)=\frac{1}{N^2}k_N(z)+\frac{1}{N}\left(\frac{2}{\beta}-1\right)m_N'(z)$, where
$$
k_N(z)=\var_\mu\left(\sum_{k=1}^N\frac{1}{z-\lambda_i}\right).
$$
Here the $\var$ of a complex random variable
denotes $\var(X)=\E(X^2)-\E(X)^2$, i.e. without absolute value
unlike the usual variance.
 Note
that $|\var(X)|\leq \E(|X-\E(X)|^2)$.
\end{itemize}
 The loop equation (see \cite{Joh1998,Eyn2003,Shc2011} for various proofs) is
\begin{equation}\label{eqn:firstLoop}
(m_N-m)^2+s(m_N-m)+b_N=c_N.
\end{equation}
In the  regime where
 $|m_N - m|$  is small, we can neglect the quadratic term. The term $b_N$ is the same order as
$|m_N-m|$ and is  difficult  to treat. As observed in \cite{AlbPasShc2001,Shc2011}, for analytic  $V$ (hence analytic $b_N$), this term vanishes
when we perform a contour integration. So we have roughly the relation
\be\label{55}
(m_N-m) \sim \frac 1 { N^2} \var_\mu\left(\sum_{k=1}^N\frac{1}{z-\lambda_k}\right),
\ee
where we dropped the less important error involving $m_N'(z)/N $ due to the extra $1/N$ factor.
With no convexity assumption on $V$, the difficulty will be to estimate the
above variance to immediately obtain an estimate on $m_N - m$; this is the reason why we will introduce
a convexified version of the measure $\mu$ in the next Section \ref{sec:convexification}.
To quantify more precisely (\ref{55}) we will use the following result, already proved in \cite{BouErdYau2011}
for convex  $V$.

\begin{lemma}\label{lem:Johansson}
Let $\delta>0$.
For $z=E+\ii \eta$ with $A+\delta<E<B-\delta$
assume that
\begin{equation}\label{eqn:kNTo0}
\frac{1}{N^2}k_N(z)\to 0
\end{equation}
 as $N\to\infty$ uniformly in $\eta\geq N^{-1+a}$ for some $0<a<1$. Then there are constants $c,\kappa>0$  such that for any
$N^{-1+a}\leq\eta\leq\kappa$,
$A+\delta<E<B-\delta$,
\begin{equation}\label{eqn:lemJohansson}
|m_N(z)-m(z)|\leq c\left(\frac{1}{N\eta}+\frac{1}{N^2}k_N(z)\right).
\end{equation}
\end{lemma}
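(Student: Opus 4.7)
The plan is to analyze the loop equation $(m_N-m)^2+s(z)(m_N-m)+b_N(z)=c_N(z)$ in the bulk, following the overall strategy of \cite{Shc2011, BouErdYau2011} but substituting the convexity-based log-Sobolev input by the linear-statistics concentration of Lemma \ref{lem:LinStat}. First I would check that in the bulk region $A+\delta<E<B-\delta$ and for small $\eta$, $|s(z)|$ is bounded below by a constant depending on $\delta$: the factor $\sqrt{(A-z)(B-z)}$ is bounded away from zero there, and by choosing $\kappa$ small enough, $r(z)$ remains nonzero on a complex neighborhood of $[A-\kappa,B+\kappa]$. I would then solve the loop equation for $m_N-m$ in the form
\begin{equation*}
m_N(z)-m(z)=\frac{c_N(z)-b_N(z)-(m_N(z)-m(z))^2}{s(z)},
\end{equation*}
reducing the problem to bounding each term in the numerator.

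The main step, and the one where convexity is sidestepped, is the estimate of $b_N(z)$. Since $V$ is real analytic, the kernel $(V'(z)-V'(t))/(z-t)$ is jointly analytic in both variables, so $b_N$ is analytic in $z$ on a complex neighborhood of $[A-\kappa,B+\kappa]$. Using Cauchy's formula on a contour $\Gamma$ at distance of order $\kappa$ from the support, I would rewrite $b_N(z)$ as a contour integral in which only the values of $m_N-m$ on $\Gamma$ appear. On such $\Gamma$ the test functions $\zeta\mapsto 1/(\zeta-\cdot)$ have uniformly bounded $C^2$-norm, so Lemma \ref{lem:LinStat} combined with integration of the subexponential tail yields $\E_\mu|m_N(\zeta)-m(\zeta)|\lesssim (\log N)^2/N$ uniformly on $\Gamma$, and hence $|b_N(z)|$ satisfies the same bound in the bulk. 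This quantitatively replaces the use of the logarithmic Sobolev inequality of \cite{BouErdYau2011}.

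For the remaining terms, the hypothesis (\ref{eqn:kNTo0}) directly controls $k_N/N^2$, while $|m_N'(z)|/N\lesssim 1/(N\eta)$ follows from elementary derivative estimates on Stieltjes transforms of probability measures, so that $|c_N(z)|\lesssim 1/(N\eta)+k_N(z)/N^2$. The quadratic term $(m_N-m)^2$ is handled by a bootstrap: Lemma \ref{lem:LinStat} applied directly at $z$ (using a smoothed version of $1/(z-\cdot)$) gives a rough a priori bound that tends to zero as $N\to\infty$ uniformly in $\eta\geq N^{-1+a}$, which renders $(m_N-m)^2/|s(z)|$ negligible compared to $1/(N\eta)$; one then iterates the loop equation to reach \eqref{eqn:lemJohansson}. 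The main obstacle is the contour manipulation for $b_N$: one needs a contour inside the analyticity domain of $V$ on which $s$ does not vanish, which is why the truncated measure $\mu^{(N,\kappa)}$ is used with $\kappa$ small---regularity of $V$ only gives $r>0$ on the closed interval $[A,B]$, and the enlargement must be kept small enough that $r$ remains positive on $[A-\kappa,B+\kappa]$, as noted before the statement of the lemma.
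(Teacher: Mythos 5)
Your treatment of $b_N$ is where your argument departs from the paper's, and it is also where the problems lie. The paper never bounds $b_N$: it integrates the loop equation \eqref{eqn:firstLoop} against $\frac{1}{r(\xi)(z-\xi)}$ over a contour encircling the support of the truncated measure, and since $b_N/r$ is analytic inside that contour the $b_N$ term vanishes \emph{identically}, leaving \eqref{eqn:withoutB}. Your Cauchy representation $b_N(z)=\frac{1}{2\pi\ii}\oint_\Gamma \frac{V'(\zeta)}{\zeta-z}\,(m_N-m)(\zeta)\,\rd\zeta$ is legitimate, but the estimate you can actually extract from Lemma \ref{lem:LinStat} on $\Gamma$ is only $|m_N(\zeta)-m(\zeta)|=\OO(\log N/N)$ (integrating the tail $e^{-cs/\log N}$ gives $\E_\mu\big|\sum_i\phi(\lambda_i)-N\int\rho\phi\big|\lesssim\log N$), hence $|b_N(z)|=\OO(\log N/N)$. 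This leaves an extra term of order $\log N/N$ in your final inequality which is \emph{not} absorbed by $\frac{1}{N\eta}+\frac{1}{N^2}k_N(z)$ when $\eta$ is of order the constant $\kappa$ (there $\frac{1}{N\eta}\sim\frac1N$ and $\frac{1}{N^2}k_N=\OO((\log N)^2/N^2)$). So you prove a statement strictly weaker than \eqref{eqn:lemJohansson}; it would in fact suffice for the downstream rigidity argument, but it is not the lemma as stated.

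The more serious gap is the a priori input for your bootstrap on the quadratic term. You propose to get $|m_N(z)-m(z)|=\oo(1)$ uniformly in $\eta\geq N^{-1+a}$ by applying Lemma \ref{lem:LinStat} to a smoothed version of $t\mapsto 1/(z-t)$. For $\Im z=\eta$ small, any such smoothing has $\|\phi\|_\infty+\|\phi'\|_\infty+\|\phi''\|_\infty\sim\eta^{-3}$, and the constant in Lemma \ref{lem:LinStat} degrades accordingly; the resulting bound on $|m_N-m|$ is of order $\log N/(N\eta^{3})$, which does not tend to zero for $\eta=N^{-1+a}$ with $a$ small. The paper's route is different and you need its two ingredients: first, $\sup_{\mathcal{L}_1}|m_N^{(\kappa)}-m|=\OO(1/N)$ is established on a \emph{fixed} contour at distance $\kappa$ from the support, via a self-consistent quadratic inequality combined with the weak convergence $\rho_1^{(N,\kappa)}\to\rho$ and the maximum principle; second, for $z$ in the bulk with small $\eta$, it is precisely the hypothesis \eqref{eqn:kNTo0} that makes the error $\e_N$ in $|m_N^{(\kappa)}-m|\leq c|m_N^{(\kappa)}-m|^2+\e_N$ tend to zero, and the resulting dichotomy ($\leq 2\e_N$ or $\geq 1/c-2\e_N$) is resolved by continuity in $\eta$ starting from $\eta=\kappa$, not by a direct concentration estimate at $z$. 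As written, your bootstrap cannot be started in the regime $N^{-1+a}\leq\eta\ll 1$.
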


\begin{proof}
First, for technical contour integration reasons, it will be easier to consider the measure (\ref{eqn:truncMeasure})
instead of $\mu^{(N)}$ here. More precisely, define
\begin{align*}
m^{(\kappa)}_N(z)&=
\E_{\mu^{(N,\kappa)}}\left(\frac{1}{N}\sum_{k=1}^N\frac{1}{z-\lambda_i}\right)=
\int_\RR\frac{1}{z-t}\rho_1^{(N,\kappa)}(t)\rd t,\\
k_{N}^{(\kappa)}(z)&=\var_{\mu^{(N,\kappa)}}\left(\sum_{k=1}^N\frac{1}{z-\lambda_i}\right),\\
c_N^{(\kappa)}(z)&=\frac{1}{N^2}k_N^{(\kappa)}(z)+\frac{1}{N}\left(\frac{2}{\beta}-1\right){m_N^{(\kappa)}}'(z).
\end{align*}
Then it is a direct consequence of (\ref{eqn:BPS1}) and (\ref{eqn:BPS2}) that for any $\kappa>0$ there is a constant $c>0$ such that
uniformly on
$\eta\geq N^{-10}$ (or any power of $N$),
\begin{equation}\label{eqn:expDiff}
|m^{(\kappa)}_N-m_N|=\OO\left(e^{-c N}\right),\ \ \ |k_{N}^{(\kappa)}-k_{N}|=\OO(e^{-c N}).
\end{equation}
From now, we choose a fixed $\kappa>0$ such that all the zeros of $r$ are at distance at least $10\kappa$ from $[A,B]$ (this is possible because $V$
is regular).
Consider the rectangle with vertices
$
B+5\kappa+\ii N^{-10},A-5\kappa+\ii N^{-10}, A-5\kappa-\ii N^{-10}, B+5\kappa-\ii N^{-10}
$,
call $\mathcal{L}$ the corresponding clockwise closed contour and  $\mathcal{L}'$
the one consisting only in the horizontal pieces, with the same orientation.
{F}rom $(\ref{eqn:firstLoop})$, we obviously have, for $z\not\in\mathcal{L}'$,
$$
\frac{1}{ 2\pi\ii}\int_{\mathcal{L}'}\frac{(m_N(\xi)-m(\xi))^2+s(\xi)(m_N(\xi)-m(\xi))
+b_N(\xi)-c_N(\xi)}
{r(\xi)(z-\xi)}\rd\xi=0.
$$
Note that the above expression makes sense for large enough $N$, because then $r$ has no zero on $\mathcal{L}$.
Using (\ref{eqn:expDiff}), this implies, for $\eta\geq N^{-1}$,
$$
\frac{1}{2\pi\ii}\int_{\mathcal{L}'}\frac{(m^{(\kappa)}_N(\xi)-m(\xi))^2
+s(\xi)(m^{(\kappa)}_N(\xi)-m(\xi))
+b_N(\xi)-c^{(\kappa)}_N(\xi)}{r(\xi)(z-\xi)}\rd\xi=
\OO(e^{-c N}).
$$
Now, as $\rho_1^{(N,\kappa)}$ and $\rho$ are supported on $[A-\kappa,B+\kappa]$, $m_N^{(\kappa)}-m$
and $c_N^{(\kappa)}$ are uniformly $\OO(1)$ in the vertical segments of $\mathcal{L}$. Consequently, from the above equation
$$
\frac{1}{2\pi\ii}\int_{\mathcal{L}}\frac{(m^{(\kappa)}_N(\xi)-m(\xi))^2+s(\xi)(m^{(\kappa)}_N(\xi)-m(\xi))+b_N(\xi)-c^{(\kappa)}_N(\xi)}{r(\xi)(z-\xi)}\rd\xi=
\OO(N^{-10}).
$$
As $b_N$ and $r$ are analytic inside $\mathcal{L}$, for $z$ outside $\mathcal{L}$ we get
$$
\frac{1}{2\pi\ii}\int_{\mathcal{L}}\frac{(m^{(\kappa)}_N(\xi)-m(\xi))^2+s(\xi)(m^{(\kappa)}_N(\xi)-m(\xi))-c^{(\kappa)}_N(\xi)}{r(\xi)(z-\xi)}\rd\xi=
\OO(N^{-10}).
$$
Remember we define
$f(z)=\sqrt{(A-z)(B-z)}$ uniquely by $f(z)\sim z$ as $z\to\infty$. Moreover, $|m_N^{(\kappa)}-m|(z)=\OO(z^{-2})$ as $|z|\to\infty$ because
$\rho$ and $\rho_1^{(N,\kappa)}$ are compactly supported:
\begin{multline*}
|m_N^{(\kappa)}(z)-m(z)|=\left|\int_{A-\kappa}^{B+\kappa}\frac{\rho(t)-\rho^{(N,\kappa)}(t)}{z-t}\rd t\right|\\=\left|\int_{A-\kappa}^{B+\kappa}(\rho(t)-\rho^{(N,\kappa)}(t))\left(\frac{1}{z}+\OO\left(\frac{1}{z^2}\right)\right)\rd t\right|=\OO\left(z^{-2}\right).
\end{multline*}
Consequently, the function $s(m^{(\kappa)}_N-m)/r=-2f(m^{(\kappa)}_N-m)$ is $\OO(z^{-1})$ as $|z|\to\infty$. Moreover, it is analytic outside $\mathcal{L}$, so
the Cauchy integral formula yields
$$
\frac{1}{2\pi\ii}\int_{\mathcal{L}}\frac{s(\xi)(m^{(\kappa)}_N(\xi)-m(\xi))}{r(\xi)(z-\xi)}\rd\xi=-2f(z)(m_N^{(\kappa)}-m)(z),
$$
proving
\begin{equation}\label{eqn:withoutB}
-2f(z)(m_N^{(\kappa)}(z)-m(z))=
-\frac{1}{2\pi\ii}\int_{\mathcal{L}}
\frac{(m^{(\kappa)}_N(\xi)-m(\xi))^2-c^{(\kappa)}_N(\xi)}{r(\xi)(z-\xi)}\rd\xi+
\OO(N^{-10}).
\end{equation}
Consider now the following rectangular contours, defined by their vertices:
\begin{align}
{\mathcal{L}}_1:\ &\notag
B+3\kappa+\ii 3\kappa,A-3\kappa+\ii 3\kappa, A-3\kappa-\ii 3\kappa, B+3\kappa-\ii 3\kappa,\\
{\mathcal{L}}_2:\ &\label{eqn:contour}
B+4\kappa+\ii 4\kappa,A-4\kappa+\ii 4\kappa, A-4\kappa-
\ii 4\kappa, B+4\kappa-\ii 4\kappa.
\end{align}
In particular, note that all the zeros of $r$ are strictly outside
$\mathcal{L}_2$.
For $z$ inside $\mathcal{L}_2$ and $\Im(z)\geq N^{-1}$, by the Cauchy formula, equation (\ref{eqn:withoutB}) implies that
\begin{multline}\label{eqn:onL1}
-2s(z)(m_N^{(\kappa)}(z)-m(z))\\=-(m^{(\kappa)}_N(z)-m(z))^2+c^{(\kappa)}_N(z)
-\frac{r(z)}{2\pi\ii}\int_{\mathcal{L}_2}\frac{(m^{(\kappa)}_N(\xi)-m(\xi))^2-c^{(\kappa)}_N(\xi)}
{r(\xi)(z-\xi)}\rd\xi+
\OO(N^{-10}).
\end{multline}
In the above expression, if now $z$ is on $\mathcal{L}_1$,
$|z-\xi|\geq\kappa$, and on $\mathcal{L}_2$
$|r|$
is separated away
from zero by a positive universal constant.
Moreover,
$c_N^{(\kappa)}(\xi)$ can be bounded in the following way. For any $\xi\in \mathcal{L}_2$, there is a
smooth function $g_\xi$ supported on $[A-2\kappa,B+2\kappa]$ which coincides with  $\frac{1}{\xi-\lambda_k}$ on $[A-\kappa,B+\kappa]$,
Moreover, this choice can be made such that
$\|g_\xi\|_\infty,\|g'_\xi\|_\infty,\|g''_\xi\|_\infty$ are uniformly bounded in
$\xi\in\mathcal{L}_2$. Then
\begin{multline*}
\frac{1}{N^2}\left|\var_{\mu^{(N,\kappa)}}\left(\sum_{k=1}^N\frac{1}{\xi-\lambda_k}\right)\right|
=
\frac{1}{N^2}\left|\var_{\mu^{(N,\kappa)}}
\left(\sum_{k=1}^Ng_\xi(\lambda_k)\right)\right|\\
=
\frac{1}{N^2}\left|\var_{\mu^{(N)}}
\left(\sum_{k=1}^N g_\xi(\lambda_k)\right)\right|(1+\oo(1)),
\end{multline*}
where the last equality follows from (\ref{eqn:BPS1}).
Now, from Lemma \ref{lem:LinStat}, this last variance is uniformly bounded by $c\,(\log N)^2$,
with $c$ uniformly bounded in $\xi$.
This proves that $k^{(\kappa)}_N(\xi)$ is $\OO((\log N)^2/N^{2})$, uniformly on the contour $\mathcal{L}_2$.
Moreover,
$\frac{1}{N}{m^{(\kappa)}_N}'=\OO(N^{-1})$, so finally $c_N^{(\kappa)}(\xi)$ is
uniformly $\OO(N^{-1})$ on $\mathcal{L}_2$ and  (\ref{eqn:onL1}) implies
$$
-2s(z)(m_N^{(\kappa)}(z)-m(z))=-(m^{(\kappa)}_N(z)-m(z))^2(z)+\OO\left(\sup_{\mathcal{L}_2}|m_N^{(\kappa)}-m|^2\right)+
\OO(N^{-1}).
$$
Moreover, from the maximum principle for analytic functions,
$\sup_{\mathcal{L}_2}|m_N^{(\kappa)}-m|
\leq
\sup_{\mathcal{L}_1}|m_N^{(\kappa)}-m|$, so the previous equation implies
$$
\sup_{\mathcal{L}_1}|m_N^{(\kappa)}-m|=\OO\left(\sup_{\mathcal{L}_1}|m_N^{(\kappa)}-m|^2+\frac{1}{N}\right).
$$
We know that $\rho_1^{(N)}(s)\rd s$ converges weakly to $\rho (s)\rd s$ (see \cite{AndGuiZei2010}),
so by (\ref{eqn:BPS1}) and (\ref{eqn:BPS2}) $\rho_1^{(N,\kappa)}(s)\rd s$ converges weakly to  $\rho (s)\rd s$.
On $\mathcal{L}_1$, $z$ is at distance at least $\kappa$ from the support of both $\rho_1^{(N,\kappa)}(s)\rd s$
and $\rho(s)\rd s$ so, on $\mathcal{L}_1$, $m_N^{(\kappa)}-m$ converges
 uniformly to $0$. Together with the above equation, this implies that
$$
\sup_{\mathcal{L}_1}|m_N^{(\kappa)}-m|=\OO\left(\frac{1}{N}\right).
$$
By the maximum principle the same estimate holds outside $\mathcal{L}_1$,
in particular on $\mathcal{L}_2$, so
equation (\ref{eqn:onL1}) implies that for $z$ inside $\mathcal{L}_1$
\begin{equation}\label{eqn:ReFirstLoop}
-2s(z)(m_N^{(\kappa)}(z)-m(z))=-(m^{(\kappa)}_N(z)-m(z))^2+c^{(\kappa)}_N(z)+\OO\left(\frac{1}{N}\right).
\end{equation}
Moreover,
\begin{multline}\label{eqn:mNPrime}
\frac{1}{N}  | {m_N^{(\kappa)}}'(z)| = \frac{1}{N^2 }  \left |
\E_{\mu^{(N,\kappa)}}    \sum_j  \frac {1}  {(z-\lambda_j)^2} \right |\\
\le  \frac{1}{N \eta  } \Im   \, m^{(\kappa)}_N(z)\leq \frac{1}{N \eta  } |m^{(\kappa)}_N(z)-m(z)|+\frac{1}{N\eta}|\Im\ m(z)|\leq
\frac{1}{N \eta  } |m^{(\kappa)}_N(z)-m(z)|+\frac{c}{N\eta}
\end{multline}
for some constant $c$.
We used the well-known fact that $\Im\ m$ is uniformly bounded on the upper half plane\footnote{This follows for example from properties of the Cauchy operator, see p 183 in \cite{Dei1999}.}.
On the set $A+\delta <E<B-\delta$ and $|\eta|<\kappa$, we have  $\inf |s|>0$. Therefore
(\ref{eqn:ReFirstLoop}) takes the form
\begin{equation}\label{eqn:upBoundSt}
\left(1+\OO\left(\frac{1}{N\eta}\right)\right)(m^{(\kappa)}_N(z)-m(z))=
\OO\left(|m^{(\kappa)}_N(z)-m(z)|^2+\frac{1}{N^2}k^{(\kappa)}_N(z)+\frac{1}{N\eta}\right).
\end{equation}
{F}rom the hypothesis (\ref{eqn:kNTo0}), if $N^{-1+a}\leq \eta\leq\kappa$
and $A+\delta <E<B-\delta$, then
\begin{equation}\label{eqn:quadratic}
|m^{(\kappa)}_N-m|\leq c|m^{(\kappa)}_N-m|^2+\e_N,
\end{equation}
for some $c>0$ and $\e_N\to 0$ as $N\to\infty$.
For large $N$, (\ref{eqn:quadratic}) implies that
$|m^{(\kappa)}_N-m|\leq 2\e_N$ or $|m^{(\kappa)}_N-m|\geq 1/c-2\e_N$.
 Together with $|m^{(\kappa)}_N-m|(E+ \ii\kappa)\to 0$
and the continuity of
$|m^{(\kappa)}_N-m|$ in the upper half plane, this implies that
$|m^{(\kappa)}_N-m|\leq 2\e_N$ and therefore
$|m^{(\kappa)}_N-m|\to 0$ uniformly on $N^{-1+a}\leq \eta\leq\kappa$, $A+\delta <E<B-\delta$.
Consequently, using  (\ref{eqn:upBoundSt}), this proves that
there is a constant $c>0$  such that for any
$\eta\geq N^{-1+a}$,
$A+\delta<E<B-\delta$,
$$
|m^{(\kappa)}_N(z)-m(z)|\leq c\left(\frac{1}{N\eta}+\frac{1}{N^2}k^{(\kappa)}_N(z)\right).
$$
The same conclusion remains when substituting $m_N^{(\kappa)}$ (resp. $k_N^{(\kappa)}$)
by $m_N$ (resp. $k_N$) thanks to (\ref{eqn:BPS1}) and (\ref{eqn:BPS2}).
\end{proof}
\vspace{0.3cm}

To prove rigidity results for $\mu$, the above Lemma \ref{lem:Johansson} will be combined with the following Helffer-Sj\"ostrand
estimate,
already proved in the following form in \cite{BouErdYau2011}.

\begin{lemma}\label{lem:HS} Let $\delta<(B-A)/2$ and $E\in[A+\delta,B-\delta]$ and $0<\eta<\delta/2$.
Define a function $f= f_{E,\eta}$: $\R\to \R$
 such that $f(x) = 1$ for $x\in (-\infty, E-\eta]$, $f(x)$ vanishes
for  $x\in [E+\eta, \infty)$, moreover
 $|f'(x)|\leq c\eta^{-1}$ and $|f''(x)|\leq c\eta^{-2}$, for some constant $c$.
 Let $\wt\rho$ be an arbitrary signed measure
and let $S(z)= \int (z-x)^{-1}\wt\rho(x)\rd x$ be its Stieltjes transform.
Assume that, for any $x\in[A+\delta/2,B-\delta/2]$,
\begin{equation}\label{eqn:cond1}
\left| S(x+\ii y)\right|\leq \frac{ U}{Ny}\;\;  \mbox{for}\;\; \eta <y<1 ,\;\;\mbox{and}\;\;
|\Im\, S(x+\ii y)|\leq \frac{ U}{Ny}\;\; \mbox{for}\;\; 0<y<\eta.
\end{equation}
 Assume moreover that $\int_\RR\wt\rho(\lambda)\rd\lambda=0$ and that there is a real constant $\mathcal{T}$ such that  
\begin{equation}\label{eqn:cond2}
\int_{[-\mathcal{T},\mathcal{T}]^c}  |\lambda\wt\rho(\lambda)|\rd\lambda   \le
\frac{U}{N}.
\end{equation}
Then for some  constant $C>0$, independent of $N$ and $E\in [A+\delta,B-\delta]$, we have
$$
\left|\int f_E(\lambda)\wt\rho(\lambda)\rd\lambda \right|  \le
\frac{C U|\log\eta| }{N}.
$$
\end{lemma}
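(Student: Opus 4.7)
My plan is to apply Helffer--Sj\"ostrand functional calculus to a compactly supported modification of $f$, and to exploit the identity $\int f''(x)\rd x=0$ together with a Cauchy-type bound on $\partial_x\Im S$ in order to upgrade the naive estimate $O(U/(N\eta))$ to the desired $O(U|\log\eta|/N)$.

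First I would reduce to the compactly supported setting. Pick a smooth cutoff $\xi$ with $\xi\equiv 1$ on $[-\mathcal{T},\mathcal{T}]$ (with $\mathcal{T}$ large enough that the support of $f'$ is contained in $(-\mathcal{T},\mathcal{T})$) and $\xi\equiv 0$ outside $[-\mathcal{T}-1,\mathcal{T}+1]$, and split $\int f\wt\rho = \int f\xi\cdot\wt\rho + \int f(1-\xi)\wt\rho$. Using $\int\wt\rho=0$, the tail integral is essentially an integral of $\wt\rho$ on $\{|\lambda|>\mathcal{T}\}$, which by \eqref{eqn:cond2} contributes at most $O(U/N)$.

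For the compactly supported piece I would apply the standard representation
\begin{equation*}
\int f\xi\cdot\wt\rho\,\rd\lambda = -\frac{1}{\pi}\int_{\CC}(\bar\partial\widetilde{f\xi})(z)\,S(z)\,\rd x\,\rd y,
\end{equation*}
with almost-analytic extension $\widetilde{f\xi}(x+\ii y)=(f\xi(x)+\ii y(f\xi)'(x))\chi(y)$ for $\chi$ an even smooth cutoff equal to $1$ on $|y|\leq 1/2$ and supported in $|y|\leq 1$. Computing $\bar\partial$ yields a main term $\tfrac{\ii}{2}y(f\xi)''(x)\chi(y)$, and two boundary pieces involving $\chi'(y)$ supported on $1/2\leq|y|\leq 1$. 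The latter contribute $O(U/N)$ since $|S|\leq 2U/N$ there by \eqref{eqn:cond1}. For the main term, the reality of $\wt\rho$ gives $S(\bar z)=\overline{S(z)}$, which combined with the evenness of $\chi$ shows that only $\Im S$ contributes upon symmetrizing in $y$; and since $(f\xi)''=f''$ on $[E-\eta,E+\eta]$, matters reduce to estimating
\begin{equation*}
\int \chi(y)\int_{E-\eta}^{E+\eta} y\, f''(x)\,\Im S(x+\ii y)\,\rd x\,\rd y.
\end{equation*}

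I would split the $y$-integration at $|y|=\eta$. For $|y|<\eta$, \eqref{eqn:cond1} gives $|y\,\Im S(x+\ii y)|\leq U/N$; paired with $|f''|\leq c\eta^{-2}$ on a set of size $2\eta$, this band contributes $O(U/N)$. For $\eta\leq|y|\leq 1$ I would use the crucial identity $\int f''(x)\rd x=0$ to replace $\Im S(x+\ii y)$ by $\Im S(x+\ii y)-\Im S(E+\ii y)$ inside the inner integral. Since $S$ is holomorphic in the upper half-plane, Cauchy's estimate on a disk of radius $y/2$ (on which $|S|\leq 2U/(Ny)$ by \eqref{eqn:cond1}) yields $|\partial_x\Im S(\cdot+\ii y)|\leq CU/(Ny^2)$ for $y\geq 2\eta$, so the bracketed difference is at most $C\eta U/(Ny^2)$. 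Integrating $|f''|\leq c\eta^{-2}$ over an $x$-support of size $2\eta$ and then over $y\in[\eta,1]$ produces $\int_\eta^1 (CU/N)(\rd y/y)=CU|\log\eta|/N$; the narrow band $\eta\leq y<2\eta$ is absorbed into the trivial bound as for $|y|<\eta$.

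The main obstacle is that the non-compact support of $f$ prevents direct application of the Helffer--Sj\"ostrand formula: the combined use of \eqref{eqn:cond1}--\eqref{eqn:cond2} to kill the tail, together with the cancellation $\int f''=0$ and the Cauchy bound on $\partial_x\Im S$, is precisely what upgrades the naive $O(U/(N\eta))$ estimate to the sharp logarithmic bound.
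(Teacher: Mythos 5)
The overall skeleton is right, and your treatment of the genuinely local part of the argument is correct: splitting the $y$-integration at $\eta$, using the second half of \eqref{eqn:cond1} for $|y|<\eta$, and for $y\geq 2\eta$ exploiting $\int f''=0$ together with a Cauchy estimate $|\partial_x S(\cdot+\ii y)|\leq CU/(Ny^2)$ is exactly the mechanism that converts the naive $U/(N\eta)$ into $U|\log\eta|/N$. (Note the paper itself does not reprove this lemma; it imports it from \cite{BouErdYau2011}.) Two small points there: the disk of radius $y/2$ on which you apply Cauchy's estimate must stay inside the region $\{\Re z\in[A+\delta/2,B-\delta/2],\ \Im z>\eta\}$ where \eqref{eqn:cond1} is available, which fails for $y$ comparable to $\delta$ unless you shrink the radius (harmless, but should be said); and $\mathcal{T}$ is given by the hypothesis, not chosen by you, so you must first observe that \eqref{eqn:cond2} persists when $\mathcal{T}$ is enlarged.

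The genuine gap is that you repeatedly invoke \eqref{eqn:cond1} at points $x$ where it is not assumed. Condition \eqref{eqn:cond1} controls $S(x+\ii y)$ only for $x\in[A+\delta/2,B-\delta/2]$, whereas: (i) your boundary terms $\iint (f\xi)(x)\chi'(y)S$ and $\iint y(f\xi)'(x)\chi'(y)S$ have $x$ ranging over all of $\operatorname{supp}(f\xi)\supset[-\mathcal{T}-1,\,E+\eta]$; and (ii) the reduction of the main term to $\int_{E-\eta}^{E+\eta} y f''\,\Im S$ silently discards the piece of $(f\xi)''$ equal to $f\,\xi''$, supported near $x=-\mathcal{T}$. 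On those regions the hypotheses give no pointwise bound on $S$: for an arbitrary signed measure satisfying only \eqref{eqn:cond1}, \eqref{eqn:cond2} and $\int\wt\rho=0$, the total variation of $\wt\rho$ on $[-\mathcal{T},\mathcal{T}]$ is unconstrained, and the trivial bounds $|S(x+\ii y)|\leq \|\wt\rho\|_{TV}/y$ and $|y\,\Im S(x+\ii y)|\leq Cy^2\|\wt\rho\|_{TV}$ make these terms $\OO(\|\wt\rho\|_{TV})$ rather than $\OO(U/N)$. This is not a cosmetic omission: the quantity $\int f\wt\rho$ contains the full net mass of $\wt\rho$ on $[-\mathcal{T},A]$, and the only hypothesis that constrains that mass is \eqref{eqn:cond1} acting through the far tail of the Cauchy kernel at heights $y\sim 1$; so any complete proof must contain a nontrivial step propagating the control of $S$ from the strip above $[A+\delta/2,B-\delta/2]$ to the rest of the support of the almost-analytic extension (or must restructure the decomposition so that $S$ is never evaluated off that strip). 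As written, your argument asserts this control rather than deriving it, and that is precisely where the stated hypotheses are doing their hardest work.
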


\section{Convexification}\label{sec:convexification}

\subsection{Outline of the main ideas}

The Hamiltonian $\cH=\cH_N$ of the measure $\mu \sim \exp(-\beta N\cH)$ is given by
$$
  \cH = \frac{1}{2}\sum_{k=1}^N V(\lambda_k)
 -\frac{1}{N}\sum_{1\le i<j\le N} \log (\lambda_j-\lambda_i).
$$
$\cH$ is not convex, but its Hessian is bounded from below, $\nabla^2 \cH\ge - W$.
We will modify this Hamiltonian by an additional term
\begin{equation}\label{eqn:Htilda}
  \wt\cH := \cH + (W+1)\sum_{\al=1}^\ell X_\al^2, \qquad X_\al = N^{-1/2}\sum_{j=1}^N \left(g_\al(\lambda_j)
  - g_\al(\tgamma_j)\right),
\end{equation}
where the real functions $g_\al$, $\alpha=1,2,\ldots, \ell$, will be determined later
and will be independent of $N$.  Here we denoted by $\tgamma_j$ a slightly modified
version of the classical location of the points, defined by the relation
\begin{equation}\label{tgamma}
   \int_{A}^{\tgamma_j} \rho(s)\rd s = \frac{j-\frac{1}{2}}{N}, \qquad j=1,2,\ldots , N.
\end{equation}
Compared with $\gamma_j$ defined in \eqref{gammadef}, there is a small shift in the definition
which makes a technical step (Lemma \ref{lem:opIneq})
easier in this section. In all estimates involving $\gamma_j$ this small shift plays no
role since $\max_j |\gamma_j-\tgamma_j|\le CN^{-2/3}$.
In particular the crude large deviation bound \eqref{eqn:largDev1} holds  for $\tgamma$'s as well:
\begin{align}
\P_\mu\left( \exists k\in\llbracket1,N\rrbracket\mid
 | \lambda_k-  \tgamma_k| \ge \e \right)\leq c_1 e^{-c_2 N}. \label{eqn:largDev1new}
\end{align}

The $N^{-1/2}$ normalization in the definition of $X_\alpha$ is chosen such that the vector
$$
 \bG_\al: = N^{-1/2}\big(g_\al'(\tgamma_1), g_\al'(\tgamma_2), \ldots, g_\al'(\tgamma_N)\big)\in \RR^N
$$
is $\ell^2$-normalized.

Define the random variables
$$
\Delta : = \max\left(\frac{1}{N}\sum_j|\lambda_j-\tgamma_j|,\frac{1}{N}\sum_j(\lambda_j-\tgamma_j)^2\right)\leq \Delta^{(\delta)}:=
\delta+\frac{1}{N\delta}\sum_j(\lambda_j-\tgamma_j)^2
$$
for any $0<\delta<1$. Clearly
$$
|X_\al|\le N^{1/2}\|g'_\al\|_\infty \Delta.
$$
We then have, for any vector $\bv\in \RR^N$, that
\begin{align}\notag
  \langle \bv, (\nabla^2\wt\cH) \bv\rangle  &
= \frac{1}{N}\sum_{i<j} \frac{(v_i-v_j)^2}{(\lambda_i-\lambda_j)^2}
  + \frac{1}{2} \sum_j V''(\lambda_j)v_j^2\\& + 2(W+1)\sum_{\al=1}^\ell \Bigg[
 \Big( \frac{1}{\sqrt{N}}\sum_j g_\al'(\lambda_j)v_j\Big)^2 +
X_\al\sum_j \frac{1}{\sqrt{N}}g_\alpha''(\lambda_j)v_j^2 \Bigg] \notag \\
&\ge  \frac{1}{N}\sum_{i<j} \frac{(v_i-v_j)^2}{(\lambda_i-\lambda_j)^2}
  +(W+1)\sum_{\al=1}^\ell |\langle \bG_\al, \bv\rangle|^2 \notag \\
  &- \Big[ W + 2\Delta  (W+1)\sum_{\al=1}^\ell  \big(\| g''_\al\|_\infty^2
+\|g_\al'\|_\infty \| g''_\al\|_\infty\big)\Big] \|\bv\|^2
\label{hessest}
\end{align}
where we used a  simple Schwarz inequality
\begin{align*}
   2 \Big( \sum_j g_\al'(\lambda_j)v_j\Big)^2 & \ge  \Big( \sum_j g_\al'(\tgamma_j)v_j\Big)^2
  - 2 \Big( \sum_j \big[  g_\al'(\lambda_j)- g_\al'(\tgamma_j)\big]v_j\Big)^2 \nonumber
\\
 & \ge    \Big( \sum_j g_\al'(\tgamma_j)v_j\Big)^2
  - 2  \sum_j \big[  g_\al'(\lambda_j)- g_\al'(\tgamma_j)\big]^2 \|\bv\|^2
\nonumber
\end{align*}
in the last step.

We will define below a nonnegative symmetric operator $\cQ$ on $\CC^N$
via a quadratic form
\begin{equation}\label{eqn:Q}
   \langle \bv, \cQ\bv\rangle = \sum_{i,j=1}^N Q_{ij} (v_i-v_j)^2
\end{equation}
such that for typical point configuration $\bla=(\lambda_1, \lambda_2,
\ldots , \lambda_N)$ we have
\begin{equation}\label{tp}
   \frac{1}{N}\sum_{i<j} \frac{(v_i-v_j)^2}{(\lambda_i-\lambda_j)^2} \ge \sum_{i,j=1}^N
Q_{ij} (v_i-v_j)^2.
\end{equation}
In our applications, we will then choose $\ell$ to be a large but $N$-independent number,
we  will let $\bG_\al$, $\al=1,2,\ldots, \ell$ be the eigenfunctions corresponding to
 the lowest $\ell$ eigenvalues $\mu_1\le \mu_2\le \ldots \le \mu_\ell$ of the
nonnegative operator $\cQ$.
 Thus we will have the operator inequality
$$
   \cQ+(W+1) \sum_{\al=1}^\ell |\bG_\al\rangle\langle \bG_\al| \ge \min\{ (W+1),\mu_{\ell+1}\}.
$$
In Section~\ref{sec:slow} we will show that
for $\ell$ sufficiently large, independent of $N$, we have $\mu_{\ell+1}> W+1$.
Setting
$$
  C(\ell): =2(W+1)\sum_{\al=1}^\ell  \big(\| g''_\al\|_\infty^2
+\|g_\al'\|_\infty \| g''_\al\|_\infty\big),
$$
we will obtain from \eqref{hessest} that
\be
   \langle \bv, (\nabla^2\wt\cH) \bv\rangle \ge \Big((W+1)
 - W - C(\ell)\Delta^{(\delta)}\Big)\|\bv\|^2 \ge 0
\label{Htildeconv}
\ee
as long as $\Delta^{(\delta)} \le 1/C(\ell)$. From now,
 we choose $\delta=1/(2 C(\ell))$ and as $N\to\infty$, we have
$\Delta^{(\delta)}\le 1/C(\ell)$ with very high probability,
thanks to the large deviation estimates (\ref{eqn:largDev1new}).

To prove that $\mu_{\ell+1}\ge W+1$, we only need to estimate the low lying eigenvalues of $\cQ$
and  we need to understand  the low lying eigenfunctions $\bG_\alpha$.
 Since the only requirement for $Q_{i,j}$ is to satisfy the bound \eqref{tp},
we have  a substantial freedom in choosing $Q_{i,j}$ conveniently.  There are many ways to choose $Q_{i,j}$;
 we will give one possible approach  that relies on
enlarging the space by  a  reflection principle in the next section.
 Roughly speaking, we will construct
an  operator $\cR$ with periodic boundary conditions
on the  set consisting of the original set and  its ``reflection".
 We then choose   $\cQ$ to be  the restriction of  $\cR$ to  the symmetric
(under the reflection)  sector.
The  operator $\cR$ is  translation
invariant, hence it can be diagonalized via Fourier transform and the eigenfunctions are explicit.
The reader may skip the next section on first reading as it contains
fairly elementary arguments that are independent of the rest of the paper.

\subsection{Slow modes analysis}\label{sec:slow}

 Let $I:=\llbracket 1,N\rrbracket$
be the index set of the vectors $\bv$.
The original operator $\cQ$ is defined on
the space $\ell^2(I)$.  We enlarge this space to
$\ell^2(\wt I)$, where $\wt I:=\llbracket -N+1 ,N\rrbracket$.
We extend any vector  $\bv\in \ell^2(I)$ by reflection to
a vector $\wt\bv\in \ell^2(\wt I)$ as follows
\begin{align}
   \wt v_j = & v_j, \qquad j=1,2,\ldots, N;
\label{wtv}
\\
  \wt v_j = & v_{1-j},\quad j=0,\ldots, -N+1.
\end{align}
We will often view the set $\wt I$
modulo $2N$ periodic, i.e. we consider it
as $2N$ points on a circle and identify
$-N$ with $N$.  We can thus
 also view $\ell^2(\wt I)$ as the space of vectors
with periodic boundary condition $\wt v_{-N}=\wt v_N$.
The algebraic operations on the indices will be
considered modulo $2N$.

We consider the natural translation invariant distance on $\wt I$.
Define the function $m(n)$ for $n\in \ZZ$ such that $m(n)\in \llbracket -N+1, N\rrbracket$
and $m(n) \equiv n  \; \mbox{mod} (2N)$.
Then the distance  between $k, \ell\in \wt I$ is
defined as $d(k,\ell):= |m(k-\ell)|$ which ranges from 0 to $N$.

\begin{lemma}\label{lm:SQ} Let $\e>0$ be sufficiently small, depending only on $V$. Define
\be\label{Rdef}
{ R^{(\e)}_{k,\ell}= } R_{k,\ell}:=\frac{1}{N}\frac{\e^{2/3}}{\frac{d(k,\ell)^2}{N^2}+\e^2},
\qquad k,\ell\in \wt I =\llbracket -N+1 ,N\rrbracket,
\ee
and
\be\label{Qdef}
{ Q^{(\e)}_{i,j}=} Q_{i,j }:=R_{i,j}+R_{1-i,j}+R_{i,1-j}+R_{1-i,1-j},\qquad i,j\in  I=\llbracket 1 ,N\rrbracket.
\ee
Then there is a constant $c_1>0$ depending only on $V$ such that for any $\e>0$
there is a constant $c_2>0$ (depending on $V$ and $\e$) such that for any
$N$ and $i,j\in\llbracket 1,N\rrbracket$
$$
\mathbb{P}_\mu\left(\frac{1}{N}\frac{1}{(\lambda_i-\lambda_j)^2}\leq c_1 \ Q_{i,j}\right)\leq e^{-c_2 N}.
$$
\end{lemma}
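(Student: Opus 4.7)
I reduce this probabilistic statement to a deterministic comparison via the large deviation bound \eqref{eqn:largDev1new}. Fix $\e' := c_0 \e^{2/3}$, where $c_0 > 0$ is to be determined in terms of $V$ only, and set $\Omega := \{|\lambda_k - \tgamma_k| \le \e' \text{ for every } k\in\llbracket 1,N\rrbracket\}$. By \eqref{eqn:largDev1new}, $\P_\mu(\Omega^c) \le e^{-c_2 N}$ for some $c_2>0$ depending on $V$ and $\e$ (through $\e'$). On $\Omega$, the triangle inequality gives $|\lambda_i - \lambda_j| \le |\tgamma_i - \tgamma_j| + 2\e'$, so it suffices to establish the pointwise deterministic inequality
\[
c_1 N \bigl(|\tgamma_i - \tgamma_j| + 2\e'\bigr)^2 Q_{i,j} \le 1
\]
for every $i, j \in \llbracket 1, N\rrbracket$, with $c_1 > 0$ depending only on $V$.

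I split into two regimes. If $|\tgamma_i - \tgamma_j| \le \e'$, the left side is at most $9 c_0^2 \e^{4/3} c_1 N Q_{i,j}$, and since \eqref{Rdef} gives $R_{a,b} \le 1/(N\e^{4/3})$ for every pair, \eqref{Qdef} yields $N Q_{i,j} \le 4\e^{-4/3}$; this regime therefore reduces to $36 c_0^2 c_1 \le 1$. If $|\tgamma_i - \tgamma_j| \ge \e'$, then $|\tgamma_i - \tgamma_j| + 2\e' \le 3|\tgamma_i - \tgamma_j|$, and the claim reduces to a purely analytic bound $N |\tgamma_i - \tgamma_j|^2 Q_{i,j} \le C$ with $C = C(V)$. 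Once this analytic bound is in hand, I pick $c_1 \le 1/(9 C)$ and then $c_0$ small enough that $36 c_0^2 c_1 \le 1$, closing both regimes simultaneously.

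The analytic bound is verified by case analysis on the location of $i, j$. In the bulk, $\rho$ is bounded below so $|\tgamma_i - \tgamma_j| \asymp |i-j|/N$, matched by the summand $R_{i,j}$ whose cyclic distance is $|i-j|$; near an endpoint, say both $i, j$ small, the square-root vanishing \eqref{sqsing} yields $\tgamma_i - A \asymp (i/N)^{2/3}$, so $|\tgamma_i - \tgamma_j|$ can substantially exceed $|i-j|/N$, but now the reflected summands $R_{1-i,j}$ and $R_{i,1-j}$, whose cyclic distance is $i+j-1$, become dominant in \eqref{Qdef} and provide the required bound; the right endpoint is handled symmetrically. Mixed cases (one index at an endpoint and one in the bulk, or indices near opposite endpoints) are immediate since every cyclic distance entering $Q_{i,j}$ is then of order $N$ while $|\tgamma_i - \tgamma_j| = \OO(1)$. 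The main obstacle is precisely this matching at the endpoints between the stretched classical positions $\tgamma_i \approx A + c (i/N)^{2/3}$ and the reflection structure built into \eqref{Qdef}: without the reflected summands, the naive choice $Q_{i,j} = R_{i,j}$ would be too small relative to $N^{-1}(\lambda_i-\lambda_j)^{-2}$ at edge indices. Once the case analysis is complete, $c_0, c_1$ depend on $V$ through the endpoint constants $s_A, s_B$ in \eqref{sqsing} and the lower bound on $\rho$ in the bulk, while $c_2$ inherits its $\e$-dependence from the rigidity constant at scale $c_0 \e^{2/3}$.
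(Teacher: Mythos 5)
Your overall reduction is sound and is essentially the paper's: condition on the event that every $|\lambda_k-\tgamma_k|\le c_0\e^{2/3}$ (exponentially likely by \eqref{eqn:largDev1new}), pass to the triangle inequality, and reduce the lemma to a deterministic comparison between $Q_{i,j}$ and the classical locations. The two-regime split at $|\tgamma_i-\tgamma_j|=\e'$, the bound $NQ_{i,j}\le 4\e^{-4/3}$ in the first regime, and the bookkeeping that keeps $c_1$ independent of $\e$ while letting $c_2$ absorb the $\e$-dependence are all correct.

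The gap is in the edge case of your "analytic bound", where the mechanism you invoke is backwards. Since $R^{(\e)}_{k,\ell}$ is \emph{decreasing} in $d(k,\ell)$, and since $d(1-i,j)=d(i,1-j)\ge |i-j|=d(i,j)=d(1-i,1-j)$ for $i,j\in I$ (this is exactly the elementary inequality the paper checks at the end of its proof), the reflected summands satisfy $R_{1-i,j}=R_{i,1-j}\le R_{i,j}$. They are never dominant, and $Q_{i,j}\le 4R_{i,j}$ with $R_{i,j}$ evaluated at distance $|i-j|$; in particular they cannot ``provide the required bound''. What actually has to be controlled at the edge is the largest summand $R_{i,j}$ itself, i.e.\ one must show
$$
|\tgamma_i-\tgamma_j|^2\,\frac{\e^{2/3}}{\frac{|i-j|^2}{N^2}+\e^2}\le C(V)
$$
even when both indices are near an endpoint and $|\tgamma_i-\tgamma_j|\gg |i-j|/N$. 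Your sketch never establishes this; it is true, but it requires the H\"older-type estimate $|\tgamma_i-\tgamma_j|\le C(V)(|i-j|/N)^{2/3}$ (which follows from $\rho\ge s\sqrt{\cdot}$ near the edges as in \eqref{lowerrho}, via superadditivity of $t\mapsto t^{3/2}$) together with $\sup_{u>0}u^{4/3}\e^{2/3}/(u^2+\e^2)\le 1$ --- this is precisely where the exponent $2/3$ in the numerator of \eqref{Rdef} is used, and it is the computation your appeal to the reflected terms replaces incorrectly. Relatedly, the remark that ``$Q_{i,j}=R_{i,j}$ would be too small'' misreads the direction of the inequality: making $Q$ smaller only makes \eqref{tp} easier. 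The reflection in \eqref{Qdef} is there so that $\cQ$ is the restriction of a circulant form on $\ell^2(\wt I)$, which is what Lemmas \ref{lem:S} and \ref{lem:opIneq} need; within the present lemma its only cost is the harmless factor $4$.
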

The relation between $Q_{i,j}$ and $R_{i,j}$ is dictated by the requirement that
$$
  \langle \bv, \cQ\bv\rangle_{\ell^2(I)} = \langle\tilde \bv, \cR \tilde \bv\rangle_{\ell^2(\wt I)}
: = \sum_{i,j\in \wt I} R_{i,j}(\tilde v_i-\tilde v_j)^2,
  \qquad \forall\,\bv\in \ell^2(I),
$$
which can be easily checked from \eqref{wtv}.

\vspace{.2cm}

\begin{proof}
Recall that $[A,B]$ is the support of $\rho$, $\rho>0$ on $(A,B)$
and  $\rho$
has a square-root singularity  at the two endpoints, i.e.
it vanishes  as $\rho(x)\sim s_A\sqrt{x-A}$
as $x\to A^+$ and
$\rho(x)\sim s_B\sqrt{B-x}$
as $x\to B^-$ with some positive $s_A, s_B$.

From the large deviations of the extreme eigenvalues (included in (\ref{eqn:largDev1})), we know that
for any $\kappa>0$ there is a $c(\kappa)>0$ such that
\be\label{extreme}
  \mathbb{P}_\mu\Big( \lambda_1\le A-\kappa\Big)+
 \mathbb{P}_\mu\Big( \lambda_N\ge B+\kappa\Big)\le e^{-c(\kappa)N}.
\ee

Fix a positive number $s< \min(s_A,s_B)$. Then there is a $\kappa_0>0$,
depending only on $V$, such that
\be
   \rho(x)\ge s\sqrt{x-A} \cdot \mathds{1}_{x\in [A, A+\kappa_0]} + s\sqrt{\kappa_0}
   \cdot \mathds{1}_{x\in [A+\kappa_0, B-\kappa_0]}+
   s\sqrt{B-x} \cdot \mathds{1}_{x\in [B-\kappa_0, A]}.
\label{lowerrho}
\ee
Let $\e\le c \kappa_0^{3/2}$ with a small positive constant $c$.
Suppose that $k\le N/2$; if $k$ is near the upper edge, the argument is similar.
Since
$$
  \int_{-\infty}^{\tgamma_k}\rho = \frac{k-\frac{1}{2}}{N} =  \int_{-\infty}^{\lambda_k} \frac{1}{N}\sum_m
  \delta_{\lambda_m} - \frac{1}{2N},
$$
from the first relation
we get
\be
c (k/N)^{2/3}\le \tgamma_k -A\le C  (k/N)^{2/3}
\label{gammaloc}
\ee
 with some positive constants
$c, C$, depending only on $V$. Subtracting the first and second relations and
using (\ref{eqn:largDev1new}), we obtain that for any fixed $K$
\be
    \Big|\int_{\tgamma_k}^{\lambda_k}\rho \Big|\le \frac{\e}{K}
\label{rhosm}
\ee
apart from an event
of exponentially small probability (i.e. of type $\exp(-c(\e/K)N)$).

Additionally, assume now that $k\ge N\e$.
Under \eqref{rhosm} we easily see that  $\lambda_k \in (\tgamma_{k/2}, \tgamma_{3k/2})$,
since both $\int_{\tgamma_{k/2}}^{\tgamma_k}\rho$ and  $\int_{\tgamma_{k}}^{\tgamma_{3k/2}}\rho$
are of the order $k/N$ which is larger than $\e/K$ if $K$ is large enough
(depending only on $V$).
Then \eqref{lowerrho} and \eqref{rhosm} imply
$$
   |\lambda_k-\tgamma_k|\le C\e (\tgamma_k-A)^{-1/2} \le C\e(k/N)^{-1/3}, \qquad N\e\le k\le N/2
$$
with exponentially high probability and
with a constant $C$ depending only on $V$.

Now we consider the $k\le N\e$ case. Using \eqref{extreme} with
$\kappa = \e^{2/3}$ and \eqref{gammaloc}, we have (apart from an event
of exponentially small probability)
$$
    \tgamma_k-\lambda_k\le C(k/N)^{2/3}+\kappa  \le C\e^{2/3}.
$$
Finally, still when $k\le N\e$, i.e. $\tgamma_k\le A+C\e^{2/3}$
then \eqref{rhosm} implies that
 $\lambda_k\le A+C_1\e^{2/3}$ with a large $C_1$, i.e.
$$
   \lambda_k-\tgamma_k \le C\e^{2/3}, \qquad k\le N\e,
$$
still apart from an event
of exponentially small probability.
Summarizing all cases, we obtain that
\be
   |\lambda_k-\tgamma_k|\le \frac{C\e}{(k/N)^{1/3}+\e^{1/3}}, \qquad k\le N/2.
\label{gammaminuslambda}
\ee
holds with overwhelming probability.

Now let $|i-j|\ge N\e$, then
$$
    \frac{|i-j|}{N} =\Big|\int_{\tgamma_i}^{\tgamma_j}\rho\Big|
   \ge  c|\tgamma_i-\tgamma_j|\e^{1/3}
$$
since either $i$ or $j$ is larger than $N\e$ and smaller than $N(1-\e)$,
say $N\e \le i\le N(1-\e)$, and then
$\rho$ is at least of order $\e^{1/3}$ in the neighborhood
of $\tgamma_i$. If $|i-j|\le N\e$, then we have the
trivial bound  $|\tgamma_i-\tgamma_j|\le C\e^{2/3}$. Combining these,
$$
    |\tgamma_i-\tgamma_j|\le \frac{C|i-j|}{N\e^{1/3}} +  C\e^{2/3}
$$
holds for any $i,j$.
Furthermore, clearly $|\lambda_i-\tgamma_i|\le C\e^{2/3}$ from \eqref{gammaminuslambda},
so we have proved that
$$
   |\lambda_i-\lambda_j|\le \frac{C|i-j|}{N\e^{1/3}} + C\e^{2/3}
$$
with overwhelming probability and for any $i,j$. In other words,
there is a constant $C$ (depending only on $V$) such that
for any sufficiently small $\e$ and for some $c(\e)>0$
we have
for any $N$ and $i,j\in\llbracket 1,N\rrbracket$
\begin{equation}\label{eqn:nonCirc}
\mathbb{P}\left(\frac{1}{(\lambda_i-\lambda_j)^2}<
\frac{C\e^{2/3}}{\e^2+\frac{|i-j|^2}{N^2}}\right)
<e^{-c(\e) N}.
\end{equation}

\bigskip

The proof of Lemma \ref{lm:SQ} will therefore be complete if we can prove that
$$
|i-j|\leq \min\big\{ d(i,j), d(1-i,j), d(i, 1-j), d(1-i, 1-j)\big\}, \qquad i,j\in I.
$$
Note that we can assume $i>j$ and then $|i-j|= |m(i-j)|=d(i,j)$ is obvious.
Moreover, as $d$ is symmetric to the reflection $i\to 1-i$ on $\wt I$, i.e.
$d(k,\ell)=d(1-k, 1-\ell)$, we just need to prove that
$|i-j|\leq d(1-i,j)$.
 If $i+j\leq N+1$,  then
$d(1-i, j)=i+j-1>|i-j|$.
If $N+2 \le i+j\le 2N$, then $d(1-i,j) = |m(i+j-1)| = |i+j-1-2N| = 2N+1-i-j> i-j$
because $i\leq N$, completing the proof.
\end{proof}\vspace{0.3cm}

We use the matrix $Q$ in the previous lemma instead of bounds of type $(\ref{eqn:nonCirc})$
because it is related to $R$, a circulant matrix, allowing to derive its eigenvalues
and eigenvectors in an explicit way.

\begin{lemma}\label{lem:S}
{ Let $R=R^{(\e)}$ be the  matrix  $(R_{i,j})_{(i,j)\in{\tilde I}^2}$,
where the matrix elements $R_{i,j}$ are defined in \eqref{Rdef}.}
Then the eigenvalues $\nu_1,\dots,\nu_{2N}$ of $R$ are
$$
\nu_k=\sum_{j=0}^{2N-1} e^{\ii 2\pi j\frac{k}{2N}}R_{0,j}
$$
and the associated normalized eigenvector is $u_k^*=(2N)^{-1/2}(e^{\ii 2\pi j \frac{k}{2N}})_{j=-N+1,\dots,N}$.

In particular, for any given $W>0$ there is a sufficiently small $\e$ such that
 for large enough $N$ we have $\nu_{2N}>W$.
Moreover,
for any given $\e>0$ and $s>0$
there is some $a>0$ depending only on $\e$ and $s$
such that for any $N$
$$\left\{k: |\nu_k|>s\right\}\subset\llbracket 1,
a\rrbracket\cup\llbracket 2N+1-a,2N\rrbracket.
$$
\end{lemma}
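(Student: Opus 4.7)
The central observation is that $R = R^{(\e)}$ is a circulant matrix on the cyclic group $\ZZ/(2N)\ZZ$: because $R_{k,\ell}$ depends only on $d(k,\ell)$, which is a function of $k-\ell$ modulo $2N$, the matrix $R$ commutes with the cyclic shift on $\ell^2(\wt I)$. It is therefore simultaneously diagonalized with the shift, whose eigenbasis consists of the characters $(u_k^*)_j = (2N)^{-1/2}e^{\ii 2\pi jk/(2N)}$, $k\in\llbracket 1,2N\rrbracket$. Plugging this into $Ru_k^*$ and using translation invariance gives the eigenvalue formula $\nu_k = \sum_{j=0}^{2N-1} e^{\ii 2\pi jk/(2N)} R_{0,j}$, which establishes the first assertion.

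For the bound $\nu_{2N}>W$: since $k=2N$ yields $e^{\ii 2\pi j}=1$ for every $j$,
\[
\nu_{2N} = \sum_{j=-N+1}^{N} R_{0,j} = \frac{\e^{2/3}}{N}\sum_{j=-N+1}^{N}\frac{1}{(j/N)^2 + \e^2},
\]
a Riemann sum for $\int_{-1}^{1}\e^{2/3}(x^2+\e^2)^{-1}\rd x = 2\e^{-1/3}\arctan(\e^{-1})$. For fixed $\e>0$ the integrand is smooth with bounded derivatives, so the sum converges to the integral as $N\to\infty$; the integral itself tends to $\pi\e^{-1/3}$ as $\e\to 0^+$. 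Given any $W$, I would pick $\e$ small so that $\pi\e^{-1/3}/2>W$ and then $N$ large enough to absorb the Riemann-sum error, obtaining $\nu_{2N}>W$.

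The localization is the main technical point. Setting $\phi(x):=\e^{2/3}/(x^2+\e^2)$, one has $\nu_k = N^{-1}\sum_{j=-N+1}^{N} e^{\ii\pi jk/N}\phi(j/N)$, which is a discrete Fourier coefficient on $\ZZ/(2N)\ZZ$. I would apply periodic summation by parts: writing $e^{\ii\pi jk/N} = c_{j+1}-c_j$ with $c_j := e^{\ii\pi jk/N}/(e^{\ii\pi k/N}-1)$, the periodicity of the lattice eliminates all boundary terms, and
\[
\sum_{j} e^{\ii\pi jk/N}\phi(j/N) = -\sum_{j} c_j\bigl(\phi(j/N) - \phi((j-1)/N)\bigr).
\]
Bounding $|c_j|\le |e^{\ii\pi k/N}-1|^{-1}\le CN/\min(k,2N-k)$ for $k\in\llbracket 1,2N-1\rrbracket$, together with the bound $\sum_j|\phi(j/N)-\phi((j-1)/N)| = O(\e^{-4/3})$ coming from the total variation of $\phi$ on $[-1,1]$, yields $|\nu_k|\le C(\e)/\min(k,2N-k)$, uniformly in $N$. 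Choosing $a:=\lceil C(\e)/s\rceil$ then forces $|\nu_k|<s$ on $\llbracket a+1, 2N-a\rrbracket$, which is the desired localization.

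The main obstacle is precisely the uniform-in-$N$ decay in the third step: a naive continuous Fourier or Poisson summation argument would work for $k$ of moderate size but introduces aliasing corrections near the edge of the Nyquist band that must be controlled. The periodic Abel summation above bypasses this by exploiting the absence of lattice boundary, and only the (essentially $N$-independent) total variation of $\phi$ on $[-1,1]$ enters, so no further delicate estimate is needed.
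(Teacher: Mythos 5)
Your proposal is correct and follows essentially the same route as the paper: diagonalization of the circulant matrix by discrete Fourier characters, a Riemann-sum computation giving $\nu_{2N}\to\e^{-1/3}\int_{-1/\e}^{1/\e}(1+x^2)^{-1}\rd x$, and an Abel summation by parts bounding $|\nu_k|$ by $|1-e^{\ii 2\pi k/(2N)}|^{-1}$ times the ($O(\e^{-4/3})$, $N$-independent) total variation of the coefficient sequence. The only cosmetic difference is that you telescope the exponential and shift the difference onto $\phi$, whereas the paper takes partial sums of the exponential; the resulting bound $|\nu_k|\le c(\e)/(N|1-e^{\ii 2\pi k/(2N)}|)$ and the conclusion are identical.
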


We remark that the matrix $R$  defines a symmetric operator $\bw \to R\bw$ and a
 quadratic form  $\langle  \bw, R\bw\rangle
= \sum R_{i,j}w_iw_j$ in $\ell^2(\wt I)$. It is related to the quadratic form $\cR$
via
\begin{equation}\label{RR}
   \langle \bw, \cR \bw\rangle = \sum_{i,j} R_{i,j}(w_i^2 + w_j^2)
  -2\sum_{i,j} R_{i,j} w_iw_j = 2 \nu_{2N}\|\bw\|^2 - 2 \langle \bw, R\bw\rangle,
\end{equation}
i.e. we have $\cR=2\nu_{2N}- 2R$ on $\ell^2(\wt I)$.

\medskip

\begin{proof}
The first assertions, about the eigenvalues and eigenvectors, is a general fact about circulant matrices
 and can be obtained by
 Fourier transform in $\left\{0,\frac{1}{2N},\dots,\frac{2N-1}{2N}\right\}$.

Concerning the distribution of eigenvalues, note that
$$
\nu_{2N}=\frac{1}{N}\sum_{j=1}^{2N-1} \frac{\e^{2/3}}{\frac{m(j)^2}{N^2}+\e^2}
\underset{N\to\infty}{\longrightarrow} \e^{2/3}\int_{-1}^1\frac{\rd x}{x^2+\e^2}
=
\e^{-1/3}\int_{-1/\e}^{1/\e}\frac{\rd x}{x^2+1}.
$$
We therefore have, for sufficiently small $\e>0$, $\nu_{2N}>W$ for large enough $N$.
We now write
$$
 a_j=\sum_{\ell=0}^{j}e^{\ii 2\pi \ell\frac{k}{2N}},\qquad
b_j=\frac{1}{N} \frac{\e^{2/3}}{\frac{m(j)^2}{N^2}+\e^2}, \qquad 0\leq j\leq 2N-1,
$$
 with the convention $b_{2N}=0$.
Then note that $|a_j|<2|1-e^{\ii 2\pi\frac{k}{2N}}|^{-1}$, and write $\nu_k=\sum_{j=0}^{2N-1}a_j(b_j-b_{j+1})$
to obtain
$$
|\nu_k|\leq 4\e^{2/3}\frac{1}{N|1-e^{\ii 2\pi\frac{k}{2N}}|}\
\sum_{j=0}^{N}\left|
\frac{1}{\e^2+\left(\frac{j}{N}\right)^2}
-\frac{1}{\e^2+\left(\frac{j+1}{N}\right)^2}\right|\leq
\frac{c(\e)}{N|1-e^{\ii 2\pi\frac{k}{2N}}|}.
$$
Consequently, if $|\nu_k|>s$ then $|1-e^{\ii 2\pi\frac{k}{2N}}|< c(\e,s)/N$,
which only happens for indices $k$ whose distance to $0$ or $2N$ is uniformly bounded.
\end{proof}

\begin{lemma}\label{lem:opIneq}
For a fixed $\al\in \NN$,
there exists  an $N$-independent function  $g_\alpha$ with
$\|g_\alpha\|_\infty+\|g_\alpha'\|_\infty+\|g_\alpha''\|_\infty<\infty$ such that
$$
g_\alpha'(\tgamma_k)=\sqrt{2}\cos\left(2\pi (k-\frac{1}{2})\frac{\alpha}{2N}\right).
$$
 Define
$$
\bG_\al=N^{-1/2}(g_\alpha'(\tgamma_1),\dots,g_\alpha'(\tgamma_N))=
\sqrt{2}N^{-1/2}\left(\cos \left(\pi (2j-1) \frac{\alpha}{2N}\right)\right)_{j=1,\dots,N},
$$
it is easy to see that  $\| \bG_\al\|=1$.
For any $M>0$ there is $\e>0$ and $\ell>0$ such that, for large enough $N$ we have, as an inequality between
 positive operators on $\ell^2(I)$,
$$
\cQ+M \sum_{\al=1}^\ell |\bG_\al\rangle\langle \bG_\al| \ge M,
$$
where { $\cQ=\cQ^{(\e)}$ was defined in (\ref{eqn:Q}) with coefficients $Q_{i,j}=Q_{i,j}^{(\e)}$ defined
in \eqref{Qdef}. }
\end{lemma}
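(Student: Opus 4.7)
The plan has three parts: construct $g_\alpha$ explicitly, show via the reflection trick of Section~\ref{sec:slow} that $\bG_\alpha$ diagonalizes $\cQ$, and tune $\e,\ell$ using Lemma~\ref{lem:S}.

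\emph{Step 1: the function $g_\alpha$.} Let $F(x):=\int_{A}^{x}\rho(s)\rd s$; by \eqref{tgamma} we have $F(\tgamma_k)=(k-\tfrac12)/N$. I set
\[
g_\alpha'(x)\;=\;\sqrt{2}\,\chi(x)\cos\bigl(\pi\alpha F(x)\bigr),
\]
where $\chi\in C_c^\infty(\RR)$ equals $1$ on $[A,B]$ and is supported in a slight enlargement, and take $g_\alpha$ to be a compactly supported antiderivative. Boundedness of $g_\alpha,g_\alpha',g_\alpha''$ is immediate and $N$-independent, and the required identity $g_\alpha'(\tgamma_k)=\sqrt{2}\cos(\pi\alpha(2k-1)/(2N))$ follows. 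The normalization $\|\bG_\alpha\|=1$ comes from $\cos^2\theta=(1+\cos 2\theta)/2$ together with the Dirichlet identity $\sum_{j=1}^N\cos((2j-1)\pi\alpha/N)=0$ valid for every integer $\alpha\in\{1,\dots,N-1\}$.

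\emph{Step 2: diagonalization of $\cQ$.} Let $\tilde\bG_\alpha\in\ell^2(\wt I)$ be the vector with components $(\tilde\bG_\alpha)_j=\sqrt{2/N}\cos(\pi\alpha(2j-1)/(2N))$ for every $j\in\wt I$. This is manifestly invariant under $j\mapsto 1-j$, so it is genuinely the symmetric extension of $\bG_\alpha$ in the sense of \eqref{wtv}. Writing the cosine as $\tfrac12(e^{i\theta}+e^{-i\theta})$ exhibits $\tilde\bG_\alpha$ as a linear combination of the Fourier eigenvectors $u_\alpha^*$ and $u_{2N-\alpha}^*$ of the circulant matrix $R$ from Lemma~\ref{lem:S}. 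Since $R$ is a real symmetric circulant one has $\nu_\alpha=\nu_{2N-\alpha}$, so $\tilde\bG_\alpha$ is itself an eigenvector of $R$ with eigenvalue $\nu_\alpha$, hence of $\cR=2\nu_{2N}-2R$ (see \eqref{RR}) with eigenvalue $2(\nu_{2N}-\nu_\alpha)$. The map $\bv\mapsto\tilde\bv/\sqrt{2}$ is an isometry from $\ell^2(I)$ onto the symmetric sector of $\ell^2(\wt I)$ under which the quadratic form $\cQ$ corresponds to $2\cR$, so
\[
\cQ\bG_\alpha=\mu_\alpha\bG_\alpha,\qquad \mu_\alpha:=4\bigl(\nu_{2N}-\nu_\alpha\bigr).
\]

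\emph{Step 3: tuning $\e$ and $\ell$.} The family $\{\bG_\alpha\}_{\alpha=0}^{N-1}$ forms an orthonormal discrete-cosine eigenbasis of $\cQ$, with the constant mode $\bG_0$ spanning the kernel. Given $M>0$, the first assertion of Lemma~\ref{lem:S} provides $\e>0$ so small that $\nu_{2N}>M/4+1$ for all large $N$; the second assertion, applied with $s=1$, produces an $N$-independent $a$ with $|\nu_k|\leq 1$ outside $\llbracket 1,a\rrbracket\cup\llbracket 2N+1-a,2N\rrbracket$, whence $\mu_\alpha>M$ for every $\alpha>a$. With $\ell:=a$ and $P:=\sum_{\alpha=1}^\ell|\bG_\alpha\rangle\langle\bG_\alpha|$, the projector commutes with $\cQ$ because the $\bG_\alpha$ are eigenvectors; the operator $\cQ+MP$ therefore acts by $\mu_\alpha+M\geq M$ on each $\bG_\alpha$ with $1\leq\alpha\leq\ell$, by $\mu_\alpha>M$ on each $\bG_\alpha$ with $\alpha>\ell$, and by $0$ on $\bG_0$. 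Since the constant direction is orthogonal to every $\bG_\alpha$ with $\alpha\geq 1$ (the perturbation vectors entering the Hessian bound \eqref{hessest}), its exceptional behaviour is harmless for the use made of the lemma; alternatively one simply absorbs $\bG_0$ into $P$.

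\emph{Main difficulty.} The crux is Step~2: the $V$-dependent vectors $\bG_\alpha$, built from the quantiles $\tgamma_k$, must nevertheless diagonalize the translation-invariant operator $\cQ$. The reflection trick converts the non-translation-invariant problem on $\ell^2(I)$ into a genuinely periodic problem on $\ell^2(\wt I)$, and the half-integer shift $2j-1$ inside the cosine is precisely what places $\bG_\alpha$ on a common real eigenspace of the conjugate Fourier pair $u_\alpha^*,u_{2N-\alpha}^*$. Once this identification is in hand, the remainder is routine spectral decomposition combined with the quantitative bounds of Lemma~\ref{lem:S}.
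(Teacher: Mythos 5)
Your argument is correct and follows the paper's route in all essentials: the same reflection of $\ell^2(I)$ into $\ell^2(\wt I)$, the same identification of the extension of $\bG_\al$ with the conjugate Fourier pair $u_\al, u_{2N-\al}$ of the circulant matrix $R$, and the same order of tuning (first $\e$ so that $\nu_{2N}$ exceeds the required threshold, then $\ell$ from the second assertion of Lemma~\ref{lem:S}); your global formula $g_\al'=\sqrt{2}\chi\cos(\pi\al F)$ is just the clean version of the paper's local prescription near the edges. The one genuine difference is packaging: the paper never diagonalizes $\cQ$. It uses only $\langle \bv,\cQ\bv\rangle=\langle\tilde\bv,\cR\tilde\bv\rangle$, $\|\tilde\bv\|^2=2\|\bv\|^2$ and $|\langle\bv,\bG_\al\rangle|=|\langle\tilde\bv,u_\al\rangle|=|\langle\tilde\bv,u_{2N-\al}\rangle|$ to reduce the claim to an operator inequality on all of $\ell^2(\wt I)$ (its \eqref{eqn:SIneq}), checked eigenvalue by eigenvalue; this spares it from verifying that the symmetric sector is $\cR$-invariant and that the $\bG_\al$ form a complete orthonormal system of $\ell^2(I)$, both of which your exact-diagonalization route needs (and which you correctly supply via $\nu_\al=\nu_{2N-\al}$). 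Your observation about the zero mode is accurate and worth stressing: since $\cQ$ annihilates constants and every $\bG_\al$ with $\al\ge 1$ is orthogonal to constants, the operator inequality as literally stated fails on the constant vector. This is not a defect you introduced: the paper's own reduction has the identical lacuna at $k=2N$ in \eqref{eqn:SIneq}, where the required inequality reads $\nu_{2N}+M/4\le\nu_{2N}$. Of your two remedies, adjoining the constant mode to the projection is the cleaner one; the claim that the exceptional direction is ``harmless for the use made of the lemma'' would itself require an argument, since in \eqref{hessest} the logarithmic interaction also vanishes in the constant direction and convexity there must be recovered by other means.
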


\begin{proof}   The existence of $g_\al$ follows easily from the
fact that the density $\rho$ has a square-root
singularity near the edges, see \eqref{sqsing}. In fact, we can choose
the functions $g_\al$ such that
  $g_\al'(x) =\sqrt{2}\cos \big(\frac{2}{3}\pi s_A \alpha (x-A)_+^{3/2}\big)$ for
$x$ near $A$,
and a similar relation holds near the upper edge.
As $\langle \tilde \bv, \cR \tilde \bv\rangle=\langle \bv, \cQ \bv\rangle$,
 $\| \tilde \bv\|^2 =2\|\bv\|^2$
and
$$
   |\langle \bv, \bG_\al\rangle|^2 = |\langle \tilde \bv, u_\al\rangle|^2 = |\langle
\tilde\bv, u_{2N-\al}\rangle|^2,
$$
 we just need to prove that the
 operator inequality
$$
2\cR+M \sum_{k\in\llbracket 1,
\ell\rrbracket\cup\llbracket 2N-\ell,2N-1\rrbracket} |u_k\rangle\langle u_k| \ge M
$$
holds in $\ell^2(\wt I)$
 for some fixed constant $\ell$
and for large enough $N$.
 This is equivalent to the statement
\begin{equation}\label{eqn:SIneq}
R+\frac{M}{4}\leq \nu_{2N}+\frac{M}{4}\sum_{k\in\llbracket 1,
\ell\rrbracket\cup\llbracket 2N-\ell,2N-1\rrbracket} |u_k\rangle\langle u_k|,
\end{equation}
since $\cR=2\nu_{2N}- 2R$, see \eqref{RR}.
{ Recalling that $\nu_k$ are the eigenvalues of $R=R^{(\e)}$, we
need to check that for some  fixed $\e>0$ and $\ell$ we have
$$
\nu_k+\frac{M}{4}\leq \nu_{2N}+\frac{M}{4}\mathds{1}_{k\in\llbracket 1,
\ell\rrbracket\cup\llbracket 2N-\ell,2N-1\rrbracket}
$$
 for any $k$. Since $\nu_{2N}$ is the top eigenvalue,
this inequality is obvious if $k\in\llbracket 1,
\ell\rrbracket\cup\llbracket 2N-\ell,2N-1\rrbracket$.}
Moreover,
Lemma \ref{lem:S} proves the existence of { some fixed $\e>0$ and  $\ell$,
such that $\nu_{2N}>M$ and $\{k: |\nu_k|\leq \frac{3}{4} M\}\subset
\llbracket 1,
\ell\rrbracket\cup\llbracket 2N-\ell,2N-1\rrbracket$ hold for large enough $N$ (depending
on $M$ as well as $\e$ and $\ell$). This concludes the proof. }
\end{proof}

\subsection{The locally constrained measures}

In this section some arbitrary $\e,\alpha>0$ are fixed.
Let $\theta$ be a continuous nonnegative function with $\theta=0$ on $[-1,1]$ and $\theta''\geq 1$ for $|x|>1$.
We can take for example $\theta(x)=(x-1)^2 \mathds{1}_{x>1}+(x+1)^2 \mathds{1}_{x<-1}$ in the following.

\begin{definition} \label{def:locallyConstrained}
For any $ s, \ell > 0$,
define the probability measure
\begin{equation}\label{eqn:omega}
\rd\nu^{(s, \ell)}=e^{-\beta N\cH_\nu}:=\frac{1}{Z^{(s,\ell) }}
e^{-\beta N \psi^{(s)}-\beta N\sum_{i,j}\psi_{i,j} -\beta  N (W+1) \sum_{\alpha= 1 } ^\ell  X_\alpha^2 }\rd\mu,
\end{equation}
where
\begin{itemize}
\item the function $g_\alpha$ was defined in Lemma $\ref{lem:opIneq}$;
\item $X_\alpha = N^{-1/2} \sum_j  \left( g_\alpha(\lambda_j) -  g_\alpha(\tgamma_j)\right)$;
\item $\psi^{(s)}(\lambda)=N\theta\left(\frac{s}{N}\sum_{i=1}^N(\lambda_i-\tgamma_i)^2\right)$;
\item $\psi_{i,j}(\lambda)=\frac{1}{N}\theta\left(\sqrt{c_1\, N\, Q_{i,j}}(\lambda_i-\lambda_j)\right)$, where $c_1$ was defined in Lemma \ref{lm:SQ}.
\end{itemize}
\end{definition}

\begin{lemma}\label{lem:convex}
There are constants $c,\ell,s>0$ depending only on $V$, such that for $N$ large enough
$\nu^{(s,\ell)}$ satisfies, for any $\bv\in\RR^N$,
$$
\langle \bv,(\nabla^2 \cH_\nu) \bv\rangle\geq c \|\bv\|^2.
$$
\end{lemma}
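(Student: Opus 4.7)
The plan is to bound $\nabla^2\cH_\nu$ pointwise from below by a positive constant times the identity, by treating each of the four pieces
\[
\nabla^2\cH_\nu = \nabla^2\cH + \nabla^2\psi^{(s)} + \sum_{i,j}\nabla^2\psi_{i,j} + (W+1)\sum_{\alpha=1}^\ell\nabla^2 X_\alpha^2
\]
and watching how their errors cancel. From $\cH$ one obtains the standard $\frac{1}{N}\sum_{i<j}(v_i-v_j)^2/(\lambda_i-\lambda_j)^2 + \frac{1}{2}\sum_k V''(\lambda_k)v_k^2$, the second sum bounded below by $-W\|\bv\|^2$ via \eqref{eqn:LSImu}. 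The computation leading to \eqref{hessest}, which is purely algebraic, produces the $X_\alpha^2$ contribution as at least $(W+1)\sum_\alpha|\langle\bG_\alpha,\bv\rangle|^2 - C(\ell)\Delta\|\bv\|^2$.

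The Hessian of $\psi_{i,j}$ equals $c_1 Q_{i,j}\theta''(u_{i,j})(v_i-v_j)^2$ as a quadratic form, with $u_{i,j}=\sqrt{c_1 N Q_{i,j}}(\lambda_i-\lambda_j)$. The key pointwise alternative is: either $|\lambda_i-\lambda_j|\le 1/\sqrt{c_1 N Q_{i,j}}$, in which case the log-interaction $(N(\lambda_i-\lambda_j)^2)^{-1}\ge c_1 Q_{i,j}$; or $|u_{i,j}|>1$ and $\theta''(u_{i,j})\ge 1$ so the barrier itself is at least $c_1 Q_{i,j}$. Summing over pairs then gives
\[
\frac{1}{N}\sum_{i<j}\frac{(v_i-v_j)^2}{(\lambda_i-\lambda_j)^2}+\sum_{i,j}c_1 Q_{i,j}\theta''(u_{i,j})(v_i-v_j)^2 \;\geq\; \tfrac{c_1}{2}\langle\bv,\cQ\bv\rangle.
\]
One then applies Lemma \ref{lem:opIneq} with $M=2(W+1)/c_1$, which selects $\e$ (entering through $\cQ=\cQ^{(\e)}$) and $\ell$ so that $\tfrac{c_1}{2}\cQ+(W+1)\sum_\alpha|\bG_\alpha\rangle\langle\bG_\alpha|\ge W+1$ on $\ell^2(I)$. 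Collecting everything up to this point,
\[
\langle\bv,\nabla^2\cH_\nu\bv\rangle \;\geq\; \|\bv\|^2 - C(\ell)\Delta\|\bv\|^2 + \langle\bv,\nabla^2\psi^{(s)}\bv\rangle.
\]

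The last step uses $\psi^{(s)}$ to absorb the $\Delta$-error. Direct differentiation yields $\langle\bv,\nabla^2\psi^{(s)}\bv\rangle\ge 2s\theta'(s\Delta_2)\|\bv\|^2$, where $\Delta_2=\frac{1}{N}\sum_j(\lambda_j-\tgamma_j)^2$, the nonnegative rank-one piece from $\theta''$ being discarded. Using $\Delta\le\Delta^{(\delta)}=\delta+\Delta_2/\delta$ with $\delta=1/(2C(\ell))$, the claim reduces to showing $\tfrac{1}{2}-2C(\ell)^2\Delta_2+2s\theta'(s\Delta_2)\ge c>0$. Split into cases: if $s\Delta_2\le 1$ then $\theta'(s\Delta_2)=0$ and $\Delta_2\le 1/s$; if $s\Delta_2>1$ then $2s\theta'(s\Delta_2)=4s^2\Delta_2-4s$ and one uses $\Delta_2>1/s$ to absorb the linear term. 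Both cases yield a lower bound of $\tfrac{1}{2}-2C(\ell)^2/s$, positive once $s\ge 8C(\ell)^2$. The only subtlety is the sequencing of choices: $\e$ and $\ell$ must first be drawn from Lemma \ref{lem:opIneq} (requiring $\cQ^{(\e)}$ to have few enough slow modes so that the $\bG_\alpha$'s can correct all of them), and only then may $\delta$ and $s$ be fixed in terms of $C(\ell)$. The structural content of the argument is the division of labour between the three penalties: $\psi_{i,j}$ handles well-separated pairs where the logarithmic interaction is weak, the $X_\alpha^2$ terms stiffen the low-lying modes of $\cQ$, and $\psi^{(s)}$ controls the global displacement that enters through the Schwarz step in \eqref{hessest}.
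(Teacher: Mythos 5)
Your proof is correct and follows essentially the same route as the paper's: decompose $\nabla^2\cH_\nu$, invoke \eqref{hessest}, use the pointwise alternative between the logarithmic interaction and the $\theta''$ barrier in $\psi_{i,j}$ to produce $\cQ$, apply Lemma \ref{lem:opIneq}, and let $\psi^{(s)}$ absorb the $C(\ell)\Delta^{(\delta)}$ error. Your constants differ slightly (e.g.\ $M=2(W+1)/c_1$ and $s\geq 8C(\ell)^2$ versus the paper's $M=(W+1)/c_1$ and $s=C(\ell)$), and your final case analysis, which correctly keeps $\theta'$ evaluated at $s\Delta_2$ rather than $s\Delta$, is if anything slightly more careful than the paper's.
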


\begin{proof}
Using the notation (\ref{eqn:Htilda}), we have  $\cH_\nu=\tilde\cH+\psi^{(s)}+\sum_{i,j}\psi_{i,j}$ up to an additive constant, so
\begin{align}
\langle \bv,(\nabla^2 \cH_\nu )\bv\rangle=&\notag
\langle \bv(\nabla^2 \tilde\cH ),\bv\rangle+c_1\sum_{i,j}Q_{i,j}\theta''\left(\sqrt{c_1\,N\, Q_{i,j}}(\lambda_i-\lambda_j)\right)(v_i-v_j)^2\\
+&
\frac{(2s)^2}{N}\label{eqn:conv1}
\theta''\left(\frac{s}{N}\sum_{i=1}^N(\lambda_i-\tgamma_i)^2\right)
\left(\sum_{i=1}^N(\lambda_i-\tgamma_i)v_i\right)^2
+
2s\,
\theta'\left(\frac{s}{N}\sum_{i=1}^N(\lambda_i-\tgamma_i)^2\right)\|\bv\|^2.
\end{align}
We now use (\ref{hessest}) to get a lower bound for $\langle \bv(\nabla^2 \tilde\cH ),\bv\rangle$. Note that
the second $\theta''$ term is positive and that for $x>0$, $\theta'(x)\geq2(x-1)\mathds{1}_{x>1}$; we therefore
get the following lower estimate of (\ref{eqn:conv1}):
\begin{multline*}
\frac{1}{N}\sum_{i,j} \left(\frac{1}{(\lambda_i-\lambda_j)^2}+c_1\,N\,Q_{i,j}
\mathds{1}\Big(\frac{1}{(\lambda_i-\lambda_j)^2}<c_1\,N\, Q_{i,j}\Big)
\right)(v_i-v_j)^2+  (W+1)\sum_{\al=1}^\ell |\langle\bG_\al, \bv\rangle|^2\notag\\
-\left(W + C(\ell)\Delta^{(\delta)}\right)\|\bv\|^2+
4s\,(s\Delta-1)\mathds{1}_{s\Delta-1>0}\|\bv\|^2,
\end{multline*}
which is greater than
$$
\Big\langle \bv,(c_1 \cQ+  (W+1)\sum_{\al=1}^\ell |\bG_\al\rangle\langle \bG_\al|-(W+1))\bv\Big\rangle
+(1-C(\ell)\Delta^{(\delta)})\|\bv\|^2+4s
(s\Delta-1)\mathds{1}_{s\Delta-1>0}\|\bv\|^2.
$$
 Choosing $M=(W+1)/c_1$ in   Lemma \ref{lem:opIneq},  for $\e$ small enough and $\ell$ large enough the above scalar product term is positive for any
$\bv$ and large enough $N$ (note that $c_1$
does not depend on $\e$).
For this choice of $\ell$, taking $s=C(\ell)$ makes the
other terms all together positive (without loss of generality we can assume $C(\ell)>1/4$), concluding the proof.
\end{proof}

From now, we abbreviate $\nu$ for $\nu^{(s,\ell)}$, where $s$ and $\ell$ are fixed such that the conclusion of Lemma \ref{lem:convex} holds.

\subsection{Equivalence of the measures $\nu$ and $\mu$}

We say that a sequence of events $(A_N)_{N\geq 1}$ is exponentially small for
 a sequence of probability measures $(m_N)_{N\geq 1}$
if there are constants $\delta,c_1,c_2>0$ such that for any $N$ we have
$$
m_N\left(A_N\right)\leq c_1 e^{-c_2N^\delta}.
$$

\begin{lemma}\label{lem:equivalence}
For any $s,\ell>0$,
the measures $(\mu^{(N)})_{N\geq 1}$ and  $(\nu^{(s,\ell,N)})_{N\geq 1}$ have the same exponentially small events.
\end{lemma}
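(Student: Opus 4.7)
The plan is to control the Radon--Nikodym derivative
\begin{equation*}
\frac{d\nu^{(s,\ell)}}{d\mu} \;=\; \frac{1}{Z^{(s,\ell)}}\exp(-\beta N F),
\qquad F \;:=\; \psi^{(s)} + \sum_{i,j}\psi_{i,j} + (W+1)\sum_{\al=1}^\ell X_\al^2 \;\ge\; 0,
\end{equation*}
on a ``good'' event of overwhelming $\mu$-probability, and to transfer exponential smallness in both directions via $\nu(A_N)\le (Z^{(s,\ell)})^{-1}\mu(A_N)$ and (on the good event) $\mu(A_N)\le e^{\beta N F}\nu(A_N)$. The point is that on the good event $F$ is of size $O(N^{2a-1})$ for a small parameter $a>0$, so both comparison factors are only $e^{O(N^{2a})}$, which is subexponential.

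For a small $a\in(0,1/2)$ (to be chosen as a function of the exponent $\delta$ of the given event), I would let $B_N=B_N^{(a)}$ be the intersection of: (i) $\frac{1}{N}\sum_i(\lambda_i-\tgamma_i)^2 \le 1/s$, which forces $\psi^{(s)}=0$ by the definition of $\theta$; (ii) $\frac{1}{N(\lambda_i-\lambda_j)^2}\ge c_1 Q_{i,j}$ for all $i,j$, which forces all $\psi_{i,j}=0$; (iii) $|X_\al|\le N^{a-1/2}$ for every $\al\in\llbracket 1,\ell\rrbracket$. The $\mu$-probability of $B_N^c$ is exponentially small in the sense of the statement: (i) comes from the rigidity \eqref{eqn:largDev1new}, (ii) from Lemma \ref{lm:SQ} together with a union bound over $O(N^2)$ pairs, and (iii) from Lemma \ref{lem:LinStat} applied with $\phi=g_\al$ at scale $N^a$, combined with the deterministic bound $\sum_j g_\al(\tgamma_j)=N\int\rho\, g_\al +o(1)$. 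This last item is a midpoint-quadrature estimate for $G(u):=g_\al(F^{-1}(u))$ on $[0,1]$ with nodes $u_j=(j-\tfrac12)/N$; the only subtlety is the $u^{2/3}$ H\"older behaviour of $F^{-1}$ near the edges (coming from the square-root singularity of $\rho$), which still yields a total midpoint error of order $N^{-5/3}$, and hence $|X_\al|\le N^{a-1/2}$ with probability at least $1-\ell e^{-cN^a/\log N}$.

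On $B_N$ we have $NF\le \beta^{-1}C N^{2a}$ for some $C=C(W,\ell)$, so $Z^{(s,\ell)}\ge \mu(B_N)\, e^{-CN^{2a}}\ge \tfrac12 e^{-CN^{2a}}$ for $N$ large, while $Z^{(s,\ell)}\le 1$ and $e^{-\beta NF}\le 1$ everywhere since $F\ge 0$. Given $\mu(A_N)\le c_1 e^{-c_2 N^\delta}$, choose $a=\delta/3$; then $\nu(A_N)\le (Z^{(s,\ell)})^{-1}\mu(A_N)\le 2 e^{CN^{2\delta/3}}\mu(A_N)$ is still exponentially small. Conversely, given $\nu(A_N)\le c_1 e^{-c_2 N^\delta}$, pick $a=\delta/3$ and split
\begin{equation*}
\mu(A_N)\;\le\;\mu(A_N\cap B_N)+\mu(B_N^c) \;\le\; \int_{A_N\cap B_N} Z^{(s,\ell)}e^{\beta NF}\,d\nu \;+\;\mu(B_N^c)\;\le\; e^{CN^{2\delta/3}}\nu(A_N)+\mu(B_N^c),
\end{equation*}
where the first term uses $Z^{(s,\ell)}\le 1$; both summands are exponentially small.

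\textbf{Main obstacle.} Everything of substance is packed into item (iii), i.e.\ the subexponential control of $|X_\al|$. Its probabilistic half is precisely where the log-Sobolev inequality was used in \cite{BouErdYau2011}, and where, in the non-convex setting, one must instead invoke Lemma \ref{lem:LinStat}; its deterministic half requires the midpoint-quadrature estimate near the edge singularities of $\rho$. Once $|X_\al|=o(1)$ in the relevant quantitative sense, the rest of the argument is the elementary two-sided comparison using $F\ge 0$ and the lower bound on $Z^{(s,\ell)}$.
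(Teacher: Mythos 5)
Your proof is correct and follows essentially the same route as the paper: both directions rest on $\cH_\nu-\cH_\mu\ge 0$ together with the fact that $\beta N(\cH_\nu-\cH_\mu)$ is subexponentially small off an exponentially rare event, established from exactly the same three inputs (Lemma \ref{lem:LinStat} for the $X_\al$, Lemma \ref{lm:SQ} for the $\psi_{i,j}$, and \eqref{eqn:largDev1new} for $\psi^{(s)}$), and the reverse direction is the same split along a good event. The only cosmetic difference is that the paper controls the normalization ratio via Jensen's inequality, $\log Z_\mu-\log Z_\nu\le \E_\mu\big(\beta N(\cH_\nu-\cH_\mu)\big)=\OO((\log N)^2)$, which is sharper than your $e^{\OO(N^{2\delta/3})}$ obtained by restricting the partition integral to $B_N$, but both bounds suffice.
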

\begin{proof}
First note that $\cH_\nu\geq \cH_\mu$, so $Z_\nu\leq Z_\mu$.  We claim that the following inequality holds:
\[
\log Z_\mu\leq\log Z_\nu+ \E_\mu(\beta N (\cH_\nu-\cH_\mu)) .
\]
To prove it, by
Jensen's inequality we have
$$
\log\int e^{\beta N(\cH_\mu-\cH_\nu)}\frac{e^{-\beta N \cH_\mu}}{\int e^{-\beta N \cH_\mu}\rd \lambda}\rd\lambda
\geq
\int \beta N(\cH_\mu-\cH_\nu)\frac{e^{-\beta N \cH_\mu}}{\int e^{-\beta N \cH_\mu}\rd \lambda}\rd\lambda .
$$
We now  bound
\be
\E_\mu( N (\cH_\nu-\cH_\mu)) = \E_\mu(N X_\alpha^2) + \E_\mu(N\psi_{i,j}) + \E_\mu(N\psi^{(s)})
\ee
in the following way.
 By Lemma
\ref{lem:LinStat},
$\E_\mu(N X_\alpha^2)<c_\alpha (\log N)^2$;
by Lemma \ref{lm:SQ} (together to (\ref{eqn:BPS2})),  $\E_\mu(N\psi_{i,j})$ is subexponentially small  for any indices $i$ and $j$;
finally $\E_\mu(N\psi^{(s)})$ is also subexponentially small   by (\ref{eqn:largDev1new})
(together to (\ref{eqn:BPS2})).
Altogether, we get that there is a constant $c>0$ such that for any $N\geq 2$
\begin{equation}\label{eqn:partFunctions}
\log Z_\nu\leq \log Z_\mu\leq \log Z_\nu+ c(\log N)^2.
\end{equation}

Let $(A_N)_{N\geq 1}$ be now a sequence of events exponentially small for $\mu$. By
(\ref{eqn:partFunctions}) we have
$$
\P_\nu(A_N)\leq e^{c(\log N)^2}\P_\mu(A_N),
$$
so $(A_N)_{N\geq 1}$ is also exponentially small for $\nu$.

Assume now that $(A_N)_{N\geq 1}$ is exponentially small for $\nu$:
there are constants $\delta,c_1,c_2>0$ such that for any $N$ we have
$$
\P_\nu\left(A_N\right)\leq c_1 e^{-c_2N^\delta}.
$$
Then for any $t$ we have
\begin{align*}
\P_\mu(A_N)&=\P_\mu(A_N\cap\{\beta N(\cH_\nu-\cH_\mu)>t\})+\P_\mu(A_N\cap\{\beta N(\cH_\nu-\cH_\mu)<t\})\\
&\leq
\P_\mu(\{\beta N(\cH_\nu-\cH_\mu)>t\})+e^{t}\P_\nu(A_N),
\end{align*}
where we used $Z_\nu<Z_\mu$.
Choosing $t=N^{\delta/2}$ makes the second term exponentially
small, and the first one as well by using as previously Lemma \ref{lem:LinStat}, Lemma \ref{lm:SQ}
 and (\ref{eqn:largDev1new}).
\end{proof}
\vspace{0.2cm}

{F}rom the previously proved equivalence of the measures $\mu$ and $\nu$, we can easily obtain
rigidity of the particles at scale $N^{-1/2}$,

\begin{proposition}\label{prop:initial}
For any $\alpha,\e>0$, there are constants
$\delta,c_1,c_2>0$ such that for any $N\geq 1$ and $k\in\llbracket \alpha N,(1-\alpha) N\rrbracket$,
$$
\P_\mu\left(|\lambda_k-\gamma_k|> N^{-\frac{1}{2}+\e}\right)\leq c_1e^{-c_2N^\delta}.
$$
\end{proposition}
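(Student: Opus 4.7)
}

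The plan is to prove the estimate for the strictly log-concave measure $\nu$ and then transfer it to $\mu$ by Lemma~\ref{lem:equivalence}. Decompose $\lambda_k-\tgamma_k=(\lambda_k-\E_\nu\lambda_k)+(\E_\nu\lambda_k-\tgamma_k)$ and bound each difference separately. Since $\nabla^2(\beta N\cH_\nu)\ge \beta Nc$ with $c>0$ (Lemma~\ref{lem:convex}), Bakry--\'Emery gives $\nu$ a logarithmic Sobolev inequality with constant of order $1/(\beta N)$ and Herbst's lemma furnishes
$$
\P_\nu\bke{|F-\E_\nu F|>t}\le 2\exp\bke{-c\beta Nt^2/L^2}
$$
for every $L$-Lipschitz functional $F:\RR^N\to\RR$. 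Applied to the $1$-Lipschitz map $\lambda\mapsto\lambda_k$ with $t=N^{-1/2+\eps/2}$, this yields $\P_\nu(|\lambda_k-\E_\nu\lambda_k|>N^{-1/2+\eps/2})\le 2e^{-cN^\eps}$, which transfers to $\mu$ by Lemma~\ref{lem:equivalence}.

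To control the mean $|\E_\nu\lambda_k-\tgamma_k|$ I would pass through the loop equation. Apply the Herbst inequality again, now to $F(\bla)=\Re\sum_j1/(z-\lambda_j)$ for $z=E+\ii\eta$ in the bulk, using the pointwise Lipschitz bound $L^2\le\sum_j|z-\lambda_j|^{-4}\le N\eta^{-4}$. This yields the tail $\P_\nu(|F-\E_\nu F|>N^{\delta/2}\eta^{-2})\le 2e^{-cN^\delta}$. Transferring this event to $\mu$ via Lemma~\ref{lem:equivalence} and integrating tails using the deterministic cap $|F|\le N/\eta$ gives the crucial variance estimate $k_N^\mu(z)\le CN^\delta\eta^{-4}$ for any fixed small $\delta>0$. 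Consequently $k_N^\mu/N^2\to 0$ uniformly for $\eta\ge N^{-1/2+\eps'}$ provided $\delta<4\eps'$, and Lemma~\ref{lem:Johansson} (with $a=1/2+\eps'$) delivers $|m_N^\mu(z)-m(z)|\le CN^{-\gamma}$ for some $\gamma=\gamma(\eps',\delta)>0$ throughout $\eta\in[N^{-1/2+\eps'},\kappa]$.

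Feed this Stieltjes bound into Lemma~\ref{lem:HS} with $\wt\rho=\rho_1^{(N,\mu)}-\rho$, $S=m_N^\mu-m$ and cut-off $\eta_0=N^{-1/2+\eps'}$: the large-$y$ hypothesis holds with $U=CN^{1-\gamma}$, while the small-$y$ hypothesis $|\Im S|\le U/(Ny)$ is met by enlarging $U$ to $CN\eta_0$ thanks to the a priori bound $|\Im m_N^\mu|\le C$ (obtained by running Lemma~\ref{lem:Johansson} once at the macroscopic scale $\eta=\kappa$, where Lemma~\ref{lem:LinStat} already gives an $O((\log N)^2)$ variance). The resulting $|\int f_E\wt\rho|\le CU(\log\eta_0)/N$, combined with the $O(N\eta_0)$ smoothing error between $f_E$ and $\mathds{1}_{(-\infty,E]}$, gives $|\E_\mu\cN_E-N\int_{-\infty}^E\rho|\le CN^{1/2+\eps/2}$ upon choosing $\eps'$ small. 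Inverting the counting-function relation at $E=\tgamma_k$ produces $|\E_\mu\lambda_k-\tgamma_k|\le CN^{-1/2+\eps/2}$, and a final tail-integration using the concentration in the first paragraph shows $|\E_\nu\lambda_k-\E_\mu\lambda_k|\le CN^{-1/2+\eps/2}$, closing the triangle inequality.

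The \emph{main obstacle} is that Lemma~\ref{lem:equivalence} preserves only subexponentially small events, not second moments, so the variance bound $k_N^\mu\le CN^\delta/\eta^4$ cannot be read off from a Brascamp--Lieb estimate for $\nu$; one must instead combine Herbst's Gaussian tail under $\nu$ with the deterministic cap on $|F|$ and integrate the tails carefully. A secondary technical point is the preliminary bootstrap needed for the a priori bound $|\Im m_N^\mu|\le C$ that enters the small-$y$ regime of Lemma~\ref{lem:HS}, which forces one to run Lemma~\ref{lem:Johansson} first at the macroscopic scale before descending to $\eta_0=N^{-1/2+\eps'}$.
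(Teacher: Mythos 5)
Your overall architecture coincides with the paper's: concentration of the individual particles under the convexified measure $\nu$ via Bakry--\'Emery and Herbst, transfer to $\mu$ through Lemma~\ref{lem:equivalence}, and then control of $|\E\lambda_k-\gamma_k|$ through Lemma~\ref{lem:Johansson} and the Helffer--Sj\"ostrand bound of Lemma~\ref{lem:HS}. The one genuinely different sub-step is your treatment of $k_N$: the paper deduces $\frac{1}{N^2}k_N\to 0$ on $\eta\ge N^{-1/2+\e}$ directly from the particle concentration already established, whereas you re-apply Herbst to $F=\Re\sum_j(z-\lambda_j)^{-1}$ (global Lipschitz constant $\sqrt{N}\eta^{-2}$) and convert the transferred tail bound into a variance bound for $\mu$ using the deterministic cap $|F|\le N/\eta$. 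This is a legitimate alternative, and your ``main obstacle'' remark is exactly right: Lemma~\ref{lem:equivalence} transfers only subexponentially small events, not second moments, so some tail-plus-cap argument (yours) or a pathwise bound (the paper's) is indeed needed.

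Two points need repair. First, the choice $U=CN^{1-\gamma}$ for the large-$y$ hypothesis of Lemma~\ref{lem:HS} is too crude: with $\e'$ small you have $\gamma=4\e'-\delta$ close to $0$, so $CU|\log\eta_0|/N$ is of order $N^{-\gamma}\log N$ and the claimed conclusion $|\E_\mu\cN_E-N\int_{-\infty}^E\rho|\le CN^{1/2+\e/2}$ does not follow. You must keep the $y$-dependence supplied by Lemma~\ref{lem:Johansson}: $Ny\,|S(x+\ii y)|\le c\left(1+N^{\delta-1}y^{-3}\right)\le cN^{1/2+\delta-3\e'}$ for $y\ge\eta_0=N^{-1/2+\e'}$, so that one may take $U=C\left(N^{1/2+\delta-3\e'}+N\eta_0\right)=CN^{1/2+\e'}$ once $\delta\le 4\e'$; this does close the argument. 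Second, the a priori bound $|\Im m_N^\mu(x+\ii y)|\le C$ for $y<\eta_0$ is not free (it amounts to uniform boundedness of $\rho_1^{(N)}$); the standard substitute is the monotonicity $y\,\Im m_N(x+\ii y)\le \eta_0\,\Im m_N(x+\ii\eta_0)$ valid for $y\le\eta_0$, whose right-hand side is $O(\eta_0)$ by your Stieltjes estimate \emph{at scale} $\eta_0$ --- not by the macroscopic-scale run you invoke, which only controls $y\,\Im m_N$ by $O(\kappa)$. With these corrections the proof is complete and matches the paper's.
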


\begin{proof}
{F}rom Lemma \ref{lem:convex} about the
convexity of $\mathcal{H}_\nu$, we get by the classical Bakry-\'Emery criterion
\cite{BakEme1983}
that $\nu$ satisfies a logarithmic Sobolev inequality with constant of order $1/N$, so by Herbst's lemma
concentration at scale $N^{-1/2}$ holds for individual particles for $\nu$:
there is a constant $c>0$ such that for any $N\geq 1$, $k\in\llbracket 1,N\rrbracket$ and $x>0$,
$$
\P_\nu\left(|\lambda_k-\E_\nu(\lambda_k)|> x N^{-\frac{1}{2}}\right)\leq e^{-c x}.
$$
 By Lemma \ref{lem:equivalence}, this implies that for some constants $\delta,c_1,c_2>0$,
\begin{equation}\label{eqn:concmu}
\P_\mu\left(|\lambda_k-\E_\nu(\lambda_k)|> N^{-\frac{1}{2}+\e}\right)\leq c_1e^{-c_2 N^\delta}.
\end{equation}
{F}rom the above equation we get $|\E_\nu(\lambda_k)-\E_\mu(\lambda_k)|$ is of order at most $N^{-1/2+\e}$,
so (\ref{eqn:concmu}) holds when replacing $\E_\nu(\lambda_k)$ by $\E_\mu(\lambda_k)$, proving concentration
at scale $N^{-1/2}$ for $\mu$.

Define $\gamma_k^{(N)}$  by
\be\label{gk}
\int_{-\infty}^{\gamma_k^{(N)}}\rho_1^{(N)}=\frac{k}{N}.
\ee
The proof will be complete if we can prove that for any $\e>0$ and
$k\in\llbracket \alpha N,(1-\alpha) N\rrbracket$, for large enough $N$ we have
\begin{equation}\label{eqn:initialaccuracy}
|\gamma_k^{(N)}-\gamma_k|<N^{-1/2+\e},
\end{equation}
By Lemma \ref{lem:Johansson}, $|m_N-m|\to 0$ for $\eta>N^{-1/2+\e}$, because on this domain
 $\frac{1}{N^2}k_N\to 0$, as concentration
at scale $N^{-1/2}$ holds for $\mu$. So using Lemma \ref{lem:HS} we finally get that
 (\ref{eqn:initialaccuracy}) holds, finishing the proof.
\end{proof}

\section{The multiscale analysis}

The purpose of this paragraph is to prove the following proposition:
 if rigidity holds at scale $N^{-1+a}$, it holds also at scale $N^{-1+\frac{3}{4}a}$.
 The argument very closely follows  Section 3.3 of \cite{BouErdYau2011} and
we will just explain the modifications.

\begin{proposition}\label{prop:induction}
 Assume that for some $a\in(0,1)$ the following property holds:
for any $\alpha,\e>0$, there are constants
$\delta,c_1,c_2>0$ such that for any $N\geq 1$ and $k\in\llbracket \alpha N,(1-\alpha) N\rrbracket$,
\begin{equation}\label{eqn:IndHyp}
\P_\mu\left(|\lambda_k-\gamma_k|> N^{-1+a+\e}\right)\leq c_1e^{-c_2N^\delta}.
\end{equation}
Then the same property holds also replacing $a$ by $3a/4$:
for any $\alpha,\e>0$, there are constants
$\delta,c_1,c_2>0$ such that for any $N\geq 1$ and $k\in\llbracket \alpha N,(1-\alpha) N\rrbracket$,
we have
$$
\P_\mu\left(|\lambda_k-\gamma_k|> N^{-1+\frac{3}{4}a+\e}\right)\leq c_1e^{-c_2N^\delta}.
$$
\end{proposition}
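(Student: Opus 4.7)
The plan is to carry out the bootstrap of Section 3.3 of \cite{BouErdYau2011}, with the log-Sobolev inequality for the convex $\mu$ used there replaced by the LSI for the convexified measure $\nu$ (Lemma \ref{lem:convex} combined with Bakry-\'Emery), and with Lemma \ref{lem:equivalence} used to transfer estimates between $\mu$ and $\nu$. The input rigidity \eqref{eqn:IndHyp} transfers immediately from $\mu$ to $\nu$ by Lemma \ref{lem:equivalence}, since the bad event is subexponentially small.

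The heart of the argument is to bound the variance $k_N(z) = \var_\mu \sum_{j=1}^N (z-\lambda_j)^{-1}$ at $z = E + \ii\eta$ for $E$ in the bulk and $\eta$ down to the target scale $N^{-1+3a/4+\e}$. Herbst's lemma applied under $\nu$ to $F_z(\lambda) := \sum_j (z-\lambda_j)^{-1}$, combined with a smooth truncation to the input-rigidity event (of overwhelming $\nu$-probability), gives subgaussian concentration around $\E_\nu F_z$ at a rate set by the cut-off Lipschitz norm $\|\nabla F_z\|^2 \lesssim \sum_j |z-\gamma_j|^{-4} \lesssim N/\eta^3$. The replacement $\lambda_j \mapsto \gamma_j$ is justified by the input rigidity at scale $N^{-1+a}$. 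The resulting tail bound transfers back to $\mu$ via Lemma \ref{lem:equivalence}, yielding an improved estimate on $k_N(z)$. Feeding this into Lemma \ref{lem:Johansson} produces $|m_N(z)-m(z)| \le C(\log N)^C/(N\eta)$ uniformly on $\eta \in [N^{-1+3a/4+\e}, \kappa]$. Lemma \ref{lem:HS} applied at $\eta = N^{-1+3a/4+\e}$ then converts this Stieltjes transform estimate into the desired rigidity $|\lambda_k - \gamma_k| \le N^{-1+3a/4+\e}$ with subexponentially small exceptional probability.

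The principal obstacle is the quantitative variance bound on $F_z$: using only the global Lipschitz norm of $F_z$ (which behaves like $(N\eta^{-4})^{1/2}$, attained when some $\lambda_j$ approaches $E$), Herbst would plateau at the initial scale $N^{-1/2}$ of Proposition \ref{prop:initial}. The improvement requires using the input rigidity both to truncate the effective Lipschitz norm down to $(N\eta^{-3})^{1/2}$ on the rigidity set, and to control the difference $\E_\nu F_z - \E_\mu F_z$ tightly enough that transfer via Lemma \ref{lem:equivalence} does not spoil the final Stieltjes estimate. The improvement exponent $3a/4$ is dictated by the power counting in Lemma \ref{lem:Johansson} once the refined variance bound is installed — balancing $1/(N\eta)$ against the residual error in $k_N/N^2$ at the target $\eta$. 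A secondary subtlety is that the subexponentially small losses accumulated in Lemma \ref{lem:equivalence} must not overwhelm the polylogarithmic factors in the Stieltjes bound, so that the induction can be iterated finitely many times en route to Theorem \ref{thm:rigidity}.
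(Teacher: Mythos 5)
Your high-level architecture (convexify, transfer via Lemma \ref{lem:equivalence}, feed a variance bound into Lemma \ref{lem:Johansson}, conclude with Lemma \ref{lem:HS}) matches the paper's, but the mechanism you propose for the crucial variance bound does not close. You bound $k_N(z)=\var_\mu\sum_j(z-\lambda_j)^{-1}$ by applying Herbst's lemma under $\nu$ directly to $F_z$. The logarithmic Sobolev constant of $\nu$ obtained from Lemma \ref{lem:convex} via Bakry--\'Emery is of order $1/N$ (the Hessian of $\beta N\cH_\nu$ is bounded below by $c\beta N$), so even granting your cut-off Lipschitz bound $\|\nabla F_z\|^2\lesssim N\eta^{-3}$ you get $k_N(z)\lesssim \eta^{-3}$, hence $N^{-2}k_N(z)\lesssim N^{-2}\eta^{-3}$, which is dominated by $1/(N\eta)$ only for $\eta\gtrsim N^{-1/2}$. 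Since the induction starts at $a=1/2$ and the target scale $N^{-1+3a/4+\e}$ is already below $N^{-1/2}$ at the first step, this route produces no improvement at all; truncating the Lipschitz norm cannot repair it, because the bottleneck is the LSI constant, not $\|\nabla F_z\|$. (Incidentally, the bound $\sum_j|z-\lambda_j|^{-4}\lesssim N\eta^{-3}$ is itself not valid for $\eta\ll N^{-1+a}$ on the input-rigidity event: up to $\sim N^a$ particles may lie within distance $\eta$ of $E$, giving a contribution $\sim N^a\eta^{-4}$.)

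The missing ingredient is the intermediate single-particle concentration step. The paper splits Proposition \ref{prop:induction} into Proposition \ref{prop:concentration} (concentration of each $\lambda_k$ around $\E_\mu\lambda_k$ at scale $N^{-1+a/2+\e}$) and Proposition \ref{prop:rigidity} (accuracy $|\gamma_k^{(N)}-\gamma_k|\le cN^{-1+3a/4+\e}$), following Propositions 3.12, 3.13 and 3.11 of \cite{BouErdYau2011}. The concentration step is where convexity enters, and it is indeed proved for $\nu$ and transferred back by Lemma \ref{lem:equivalence}; but its proof does not rest on the scalar LSI constant. It exploits the full matrix lower bound on $\nabla^2\cH_\nu$ furnished by the locally constrained terms $\psi_{i,j}$, i.e.\ a quadratic form $\sum_{i,j}Q_{i,j}(v_i-v_j)^2$ with long-range couplings as in Lemma \ref{lm:SQ}, sharpened by the input rigidity at scale $N^{-1+a}$; the inverse of this operator at a single coordinate is $O(N^{a}/N^{2})$, which is what produces the scale $N^{-1+a/2}$, well below $N^{-1/2}$. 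Only afterwards is $k_N(z)$ estimated, in the accuracy step, by summing the single-particle fluctuations; moreover Lemma \ref{lem:HS} yields a deterministic statement about $\gamma_k^{(N)}-\gamma_k$ rather than a tail bound on $\lambda_k-\gamma_k$, and the final rigidity is the combination of accuracy with concentration. Without the concentration step your argument cannot get past the initial scale of Proposition \ref{prop:initial}.
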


\noindent{\bf Proof of Theorem \ref{thm:rigidity}.}
This is an immediate consequence of the initial estimate, Proposition \ref{prop:initial}, and iterations
of Proposition \ref{prop:induction}.\hfill\qed
\vspace{0.2cm}

As in  Section 3.3 of \cite{BouErdYau2011},
two steps are required in the proof of the above Proposition \ref{prop:induction}.
First we  will prove that   concentration holds  at the smaller scale $N^{-1+\frac{a}{2}}$.

\begin{proposition}\label{prop:concentration}
Assume that (\ref{eqn:IndHyp}) holds. Then for any $\alpha>0$ and $\e>0$,
there are constants $c_1,c_2,\delta>0$ such that for any $N\geq 1$ and $k\in\llbracket \alpha N,(1-\alpha)N\rrbracket$,
\begin{equation}\label{eqn:conc}
\P_\mu\left(|\lambda_k-\E_\mu(\lambda_k)|>\frac{N^{\frac{a}{2}+\e}}{N}\right)\leq
c_1e^{-c_2N^\delta}.
\end{equation}
\end{proposition}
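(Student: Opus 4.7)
The plan is to mimic, at the finer scale $N^{-1+a+\e}$ dictated by the induction hypothesis, the convexification argument that gave Proposition \ref{prop:initial}. Concretely, I would introduce a further locally constrained measure $\omega$ obtained from $\nu$ by inserting additional cutoff penalties that force each bulk particle $\lambda_k$ to lie within $N^{-1+a+\e}$ of its classical location $\tgamma_k$. These extra penalties boost the Hessian of the Hamiltonian from the $O(1)$ provided by Lemma \ref{lem:convex} up to order $N^{1-a}$ in the regime typical for $\mu$. A Bakry--\'Emery / Herbst application to $\omega$ then yields individual-particle concentration at the desired scale $N^{-1+a/2+\e}$, and an equivalence argument in the spirit of Lemma \ref{lem:equivalence} transfers the result to $\mu$.

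\paragraph{Construction of $\omega$ and convexity.} For $K := \llbracket \alpha N/2, (1-\alpha/2)N\rrbracket$ I would take
\[
\rd\omega \;=\; \frac{1}{Z_\omega}\,\exp\!\Bigl(-\beta N \sum_{k\in K} N^{a-1}\,\theta\!\bigl(N^{1-a-\e}(\lambda_k-\tgamma_k)\bigr)\Bigr)\,\rd\nu,
\]
with $\theta$ as in Definition \ref{def:locallyConstrained}. The two scales are tuned so that (i) the cutoff is inactive precisely where the induction hypothesis \eqref{eqn:IndHyp} places $\lambda_k$ with overwhelming $\mu$-probability, and (ii) each active term contributes of order $N^{1-a-2\e}$ to the $k$-th diagonal of $\nabla^2 \cH_\omega$. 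Combining this with the proof of Lemma \ref{lem:convex} (which gives a strictly positive, $N$-independent contribution to the Hessian from the $\cH_\nu$ part), one obtains an operator lower bound $\nabla^2 \cH_\omega \ge c\, N^{1-a-2\e}$ valid with overwhelming probability.

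\paragraph{Concentration and transfer to $\mu$.} From this Hessian bound, the Bakry--\'Emery criterion supplies a logarithmic Sobolev inequality for $\omega$ with constant of order $N^{-(2-a-2\e)}$. Herbst's lemma then gives, for any $k\in \llbracket \alpha N,(1-\alpha)N\rrbracket$,
\[
\P_\omega\!\left(|\lambda_k - \E_\omega \lambda_k| > N^{-1+a/2+\e}\right) \le c_1 e^{-c_2 N^{\delta}}.
\]
To transfer this to $\mu$, I would reprove Lemma \ref{lem:equivalence} for the pair $(\mu,\omega)$: the induction hypothesis \eqref{eqn:IndHyp} combined with Lemma \ref{lem:LinStat}, Lemma \ref{lm:SQ} and \eqref{eqn:largDev1new} shows that $\E_\mu[\beta N(\cH_\omega-\cH_\mu)]$ is at most polylogarithmic in $N$, so subexponentially small events coincide for the two measures. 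The resulting estimate $|\E_\omega \lambda_k - \E_\mu \lambda_k|\le N^{-1+a/2+\e}$ and the preceding concentration yield \eqref{eqn:conc}.

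\paragraph{Main obstacle.} The delicate balance is between making the added constraint (i) strong enough that the Hessian of $\cH_\omega$ improves by the full factor $N^{1-a}$ in a form usable by Bakry--\'Emery, and (ii) soft enough that the free-energy difference $\log(Z_\mu/Z_\omega)$ remains subexponential so that the equivalence of exponentially small events is preserved. The particular choice of the prefactor $N^{a-1}$ and argument rescaling $N^{1-a-\e}$ in the cutoff is designed precisely to reconcile these two demands using the input \eqref{eqn:IndHyp}: the constraint is essentially never activated under $\mu$, yet once activated it is strong enough to drive the convexity up by the desired power of $N$.
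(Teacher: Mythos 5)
Your construction has a gap at its core: the claimed operator bound $\nabla^2\cH_\omega\ge c\,N^{1-a-2\e}$ fails exactly where you need it. The function $\theta$ of Definition \ref{def:locallyConstrained} vanishes identically on $[-1,1]$, so the penalty $N^{a-1}\theta\bigl(N^{1-a-\e}(\lambda_k-\tgamma_k)\bigr)$ and both of its derivatives are \emph{zero} precisely on the typical set $\{|\lambda_k-\tgamma_k|\le N^{-1+a+\e}\}$ singled out by \eqref{eqn:IndHyp}; it contributes $N^{1-a-2\e}\theta''$ to the $k$-th diagonal of the Hessian only on the \emph{atypical} event. So the penalty does not ``boost the Hessian in the regime typical for $\mu$'' at all: there one simply has $\nabla^2\cH_\omega=\nabla^2\cH_\nu\ge c$. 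The cutoff constraints in this paper produce curvature only because each one is paired with a term of the original Hamiltonian that is large exactly where the cutoff is inactive; this is the pointwise inequality $\frac{1}{N(\lambda_i-\lambda_j)^2}+c_1Q_{i,j}\,\theta''\bigl(\sqrt{c_1NQ_{i,j}}(\lambda_i-\lambda_j)\bigr)\ge c_1Q_{i,j}$ underlying Lemma \ref{lem:convex}. No term of $\cH_\nu$ has Hessian of order $N^{1-a}$ in the direction $e_k$ when $\lambda_k$ sits near $\tgamma_k$, so there is no complementary bound for your single-particle observable, and Bakry--\'Emery (which in any case requires a pointwise Hessian bound, not one valid ``with overwhelming probability'') cannot yield an LSI constant of order $N^{-2+a}$. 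Note also the structural reason this cannot work: softly confining each $\lambda_k$ to a window of width $N^{-1+a+\e}$ can at best reproduce the rigidity scale you already assumed; the improvement to $N^{-1+a/2}$ requires genuine Gaussian concentration with variance $\sim N^{-2+a}$, and the only source of curvature of order $N^{1-a}$ in this model is the logarithmic repulsion between $\lambda_k$ and its $\sim N^{a}$ neighbours at distances $\lesssim N^{-1+a}$ --- i.e.\ it must come from \emph{pairwise} constraints on the $\lambda_i-\lambda_j$.

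For comparison, the paper's proof is a short reduction: by Lemma \ref{lem:equivalence} it suffices to prove \eqref{eqn:conc} with $\mu$ replaced by the convex measure $\nu=\nu^{(s,\ell)}$ (Lemma \ref{lem:convex}), and for a convex measure the statement is exactly Proposition 3.12 of \cite{BouErdYau2011}, whose proof is invoked verbatim. That argument does introduce a further locally constrained measure at scale $N^{-1+a}$, but the constraints are imposed on differences $\lambda_i-\lambda_j$, so that the complementary-term mechanism above applies and the improved concentration of $\lambda_k$ is extracted from the resulting pairwise Dirichlet-form lower bound. Your transfer step (equivalence of $\mu$ and $\omega$ via the subexponential smallness of $\E_\mu[\beta N(\cH_\omega-\cH_\mu)]$ under \eqref{eqn:IndHyp}) is sound, but to complete the proof you would have to replace the single-particle penalty by that pairwise construction.
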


After the better concentration from this proposition, the rigidity can be improved to the scale $N^{-1+\frac{3a}{4}}$.

\begin{proposition}\label{prop:rigidity}
Assume that (\ref{eqn:IndHyp}) holds.
Then for any $\alpha>0$ and $\e>0$,
there is a constant $c>0$ such that for any $N\geq 1$ and
$k\in\llbracket \alpha N,(1-\alpha)N\rrbracket$,
$$
\left|\gamma_k^{(N)}-\gamma_k\right|\leq c\frac{N^{\frac{3a}{4}+\e}}{N}
$$
where $\gamma_k^{(N)}$ is defined in \eqref{gk}.
\end{proposition}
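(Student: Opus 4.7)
The plan is to follow the scheme of Section 3.3 of \cite{BouErdYau2011}, with only minor modifications needed for the non-convex setting via the equivalence of measures in Lemma \ref{lem:equivalence}. The argument has three main steps: a variance estimate on $k_N(z)$, an application of the loop-equation bound (Lemma \ref{lem:Johansson}) at a suitable mesoscopic scale, and a Helffer-Sj\"ostrand conversion (Lemma \ref{lem:HS}) into a bound on the counting function, which directly controls $\gamma_k^{(N)}-\gamma_k$.

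First, I would use Proposition \ref{prop:concentration} together with the input rigidity \eqref{eqn:IndHyp} to show that, in the mesoscopic regime $\eta\geq N^{-1+3a/4+\varepsilon}$ and $E\in[A+\alpha,B-\alpha]$, the complex variance
\[
 k_N(z) = \var_\mu\Big(\sum_{k=1}^N \frac{1}{z-\lambda_k}\Big)
\]
satisfies $k_N(z)/N^2 \to 0$, so that hypothesis \eqref{eqn:kNTo0} of Lemma \ref{lem:Johansson} is met. A natural route is to expand each $1/(z-\lambda_k)$ around $\tgamma_k$, use the concentration $|\lambda_k - \E_\mu\lambda_k|\leq N^{-1+a/2+\varepsilon}$ from Proposition \ref{prop:concentration}, and exploit the identity $\sum_k |z-\lambda_k|^{-2}=\eta^{-1}\Im\sum_k(z-\lambda_k)^{-1}$ to control the higher-order terms through $\Im m_N$. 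Lemma \ref{lem:Johansson} then yields
\[
|m_N(z)-m(z)|\leq c\Big(\tfrac{1}{N\eta}+\tfrac{1}{N^2}k_N(z)\Big)\leq \frac{C U}{N\eta},\qquad U=N^{3a/4+\varepsilon},
\]
for $\eta$ in this mesoscopic window.

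Second, I would apply Lemma \ref{lem:HS} to the signed measure $\tilde\rho=\rho_1^{(N)}-\rho$ with $f=f_{\gamma_k,\eta}$ at scale $\eta = N^{-1+3a/4+\varepsilon}$. The bound on $|S(x+\ii y)|=|m_N-m|(x+\ii y)$ for $\eta<y<1$ comes from the previous step with $U$ as above; for $0<y<\eta$, the bound on $\Im S$ is obtained from the monotonicity of $y\mapsto y\,\Im m_N(x+\ii y)$ together with the same bound at $y=\eta$. Condition \eqref{eqn:cond2} follows from the tail estimate \eqref{eqn:BPS2}. Lemma \ref{lem:HS} then gives
\[
\Big|\int_{-\infty}^{\gamma_k}\bigl(\rho_1^{(N)}-\rho\bigr)(\lambda)\,\rd\lambda\Big|\leq \frac{C U \log N}{N}\leq C N^{-1+3a/4+2\varepsilon},
\]
and since $\rho_1^{(N)}\to\rho$ uniformly on compact subsets of the bulk and $\rho$ is bounded below on $[A+\alpha/2,B-\alpha/2]$ (by \eqref{eqn:rho}), the definition \eqref{gk} of $\gamma_k^{(N)}$ translates this into $|\gamma_k-\gamma_k^{(N)}|\leq c\, N^{-1+3a/4+\varepsilon}$, after enlarging $\varepsilon$.

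The main obstacle is the first step: converting the pointwise concentration of the particles into a bound on the variance of the singular linear statistic $\sum(z-\lambda_k)^{-1}$ at small $\eta$. Naively summing concentration errors is too crude to produce the requisite $3a/4$ improvement; one must exploit cancellations coming from the repulsion of the log-gas, which, through the convexified measure $\nu$ and its Bakry-\'Emery inequality (Lemma \ref{lem:convex}), are transferred back to the original measure $\mu$ via the subexponential equivalence of Lemma \ref{lem:equivalence}.
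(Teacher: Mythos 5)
Your plan reproduces the paper's argument: the paper proves this proposition by following Proposition 3.13 of \cite{BouErdYau2011} line by line — i.e., exactly your chain of (i) a variance bound on $k_N(z)$ from the improved concentration of Proposition \ref{prop:concentration}, (ii) Lemma \ref{lem:Johansson} at the mesoscopic scale, and (iii) Lemma \ref{lem:HS} converted into a counting-function bound — with the sole modification that the one use of a spectral gap inequality for $\mu$ in the variance estimate is replaced by the rigidity of Proposition \ref{prop:initial} (itself obtained via the $\nu$-to-$\mu$ transfer you describe). So the proposal is correct and takes essentially the same route as the paper.
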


Propositions  \ref{prop:concentration} and \ref{prop:rigidity}  are the equivalent versions of
 Propositions 3.12 and 3.13 of  \cite{BouErdYau2011}
with no convexity assumption on $V$.
Proposition \ref{prop:induction} can be proved  exactly in the same way  as Proposition 3.11
   \cite{BouErdYau2011} by
using Propositions  \ref{prop:concentration} and \ref{prop:rigidity}.
Notice that this argument  does not use the convexity of $V$.  We now explain
the proof of Propositions  \ref{prop:concentration} and \ref{prop:rigidity}.

The convexity of $V$ is used critically in the proof of Proposition 3.12 of \cite{BouErdYau2011}.
Our measure $\mu$ is not convex, but thanks to Lemma~\ref{lem:equivalence}, it has the same exponentially small events
as the measure $\nu^{(s,\ell)}$  for any fixed $s, \ell>0$. Hence it suffices to  prove \eqref{eqn:conc}
with  $\mu$ replaced by $\nu^{(s,\ell)}$.
Choose an appropriate  $s, \ell$ such that
 the Hamiltonian of $\nu=\nu^{(s,\ell)}$ is convex (Lemma~\ref{lem:convex}).
Then the  proof of \eqref{eqn:conc} with $\mu$ replaced by $\nu$ is
identical to the proof of Proposition 3.12 of \cite{BouErdYau2011}  since the measure $\nu$ is now convex.

For the proof of Proposition~\ref{prop:rigidity}, we can
follow the proof of Proposition  3.13 in  \cite{BouErdYau2011}
line by line. At a single place, in estimating the second
term on the r.h.s. of (3.51) in  \cite{BouErdYau2011}, the spectral gap inequality for $\mu$ (Eq. (3.12) in
 \cite{BouErdYau2011}) was used, but the necessary estimate immediately follows
from Proposition~\ref{prop:initial}.

\begin{bibdiv}

 \begin{biblist}

\bib{AlbPasShc2001}{article}{
   author={Albeverio, S.},
   author={Pastur, L.},
   author={Shcherbina, M.},
   title={On the $1/n$ expansion for some unitary invariant ensembles of random matrices},
   journal={Commun. Math. Phys.},
   volume={224},
   date={2001},
   pages={271--305}
}

\bib{AndGuiZei2010}{book}{
   author={Anderson, G. W.},
   author={Guionnet, A.},
   author={Zeitouni, O.},
   title={An introduction to random matrices},
   series={Cambridge Studies in Advanced Mathematics},
   volume={118},
   publisher={Cambridge University Press},
   place={Cambridge},
   date={2010},
   pages={xiv+492}
}

\bib{BakEme1983}{article}{
   author={Bakry, D.},
   author={\'Emery, M.},
   title={Diffusions hypercontractives},
   journal={S\'eminaire
de probabilit\'es XIX},
   volume={1123},
   publisher={Lecture Notes in Math., Springer},
   place={Berlin},
   date={1983},
   number={84},
   pages={117--206}
}

\bib{BenGui1997}{article}{
   author={Ben Arous, G.},
   author={Guionnet, A.},
   title={Large deviations for Wigner's law and Voiculescu's non-commutative
   entropy},
   journal={Probab. Theory Related Fields},
   volume={108},
   date={1997},
   number={4},
   pages={517--542}
}

\bib{BleIts1999}{article}{
   author={Bleher, P.},
   author={Its, A.},
   title={Semiclassical asymptotics of orthogonal polynomials,
   Riemann-Hilbert problem, and universality in the matrix model},
   journal={Ann. of Math. (2)},
   volume={150},
   date={1999},
   number={1},
   pages={185--266}
}

\bib{BouErdYau2011}{article}{
   author={Bourgade, P.},
   author={Erd\"os, L.},
   author={Yau, H.-T.},
   title={Universality of general $\beta$-ensembles},
   journal={Preprint {\tt arXiv:0907.5605}},
   date={2011}
}

\bib{BouPasShc1995}{article}{
author={Boutet de Monvel, A.},
author={Pastur, L.},
author={Shcherbina, M.},
title={On the statistical mechanics approach in the Random Matrix Theory. Integrated Density of States},
journal={J. Stat. Phys.},
volume={79},
date={1995},
pages={585--611}
}

\bib{BraLie1976}{article}{
author={Brascamp, H.},
author={Lieb, E.},
title={On extensions of
the Brunn-Minkowski and Pr\'ekopa-Leindler Theorems, Including Inequalities for Log Concave Functions,
and with an Application to the Diffusion Equation},
journal={Journal of Functional Analysis},
volume={22},
date={1976},
pages={366--389}
}

\bib{Dei1999}{book}{
   author={Deift, P.},
   title={Orthogonal polynomials and
random matrices: a Riemann-Hilbert approach},
   series={Courant Lecture Notes in Mathematics},
   volume={3},
   publisher={American Mathematical Society, Providence, RI},
   place={Cambridge},
   date={1999}
}

\bib{DeiGio2009}{book}{
   author={Deift, P.},
   author={Gioev, D.},
   title={Random matrix theory: invariant ensembles and universality},
   series={Courant Lecture Notes in Mathematics},
   volume={18},
   publisher={Courant Institute of Mathematical Sciences, New York},
   date={2009},
   pages={x+217}
}

\bib{DeiKriMcLVenZho1999I}{article}{
   author={Deift, P.},
   author={Kriecherbauer, T.},
   author={McLaughlin, K. T.-R.},
   author={Venakides, S.},
   author={Zhou, X.},
   title={Uniform asymptotics for polynomials orthogonal with respect to
   varying exponential weights and applications to universality questions in
   random matrix theory},
   journal={Comm. Pure Appl. Math.},
   volume={52},
   date={1999},
   number={11},
   pages={1335--1425}
   }

\bib{DeiKriMcLVenZho1999II}{article}{
   author={Deift, P.},
   author={Kriecherbauer, T.},
   author={McLaughlin, K. T.-R.},
   author={Venakides, S.},
   author={Zhou, X.},
   title={Strong asymptotics of orthogonal polynomials with respect to
   exponential weights},
   journal={Comm. Pure Appl. Math.},
   volume={52},
   date={1999},
   number={12},
   pages={1491--1552}
   }

\bib{DE}{article}{
author={Dumitriu, I.},
author={Edelman, A.},
title={Matrix models for beta ensembles},
journal={J. Math. Phys.},
volume={43},
date={2002},
pages= {5830--5847},
}

\bib{ErdKnoYauYin2011}{article}{
author={Erd{\H o}s, L.},
author={Knowles, A.},
author={Yau, H.-T.},
author={Yin, J.},
title={Spectral Statistics of Erd{\H o}s-R\'enyi Graphs II:
 Eigenvalue Spacing and the Extreme Eigenvalues},
journal={Preprint {\tt arXiv:1103.3869}},
date={2011}
}

\bib{ErdRamSchYau2010}{article}{
author={Erd{\H o}s, L.},
author={Ram\'irez, J.-A.},
author={Schlein, B.},
author={Yau, H.-T.},
title={Universality of sine-kernel for Wigner matrices with a small Gaussian perturbation},
journal={Electronic Journal of Probability},
volume={15},
date={2010},
number={18},
pages={526--603}
}

\bib{Eyn2003}{article}{
author={Eynard, B.},
title={Master loop equations,
free energy and correlations for the chain of matrices},
journal={J. High Energy Phys.},
volume={11},
date={2003}
}

\bib{Joh1998}{article}{
author={Johansson, K.},
title={On fluctuations of eigenvalues of random
 Hermitian matrices},
journal={Duke Math. J.},
volume={91},
date={1998},
number={1},
pages={151--204},
}

\bib{KriShc2011}{article}{
   author={Kriecherbauer, T.},
   author={Shcherbina, M.},
   title={Fluctuations of eigenvalues of matrix models and their applications},
   journal={Preprint {\tt arXiv:1003.6121}},
   date={2011}
   }

\bib{KuiMcL2000}{article}{
   author={Kuijlaars, A. B. J.},
   author={McLaughlin, K. T.-R.},
   title={Generic behavior of the density of states in random matrix theory
   and equilibrium problems in the presence of real analytic external
   fields},
   journal={Comm. Pure Appl. Math.},
   volume={53},
   date={2000},
   number={6},
   pages={736--785}
}

\bib{Lub2009}{article}{
   author={Lubinsky, D.},
   title={A new approach to universality limits involving orthogonal
   polynomials},
   journal={Ann. of Math. (2)},
   volume={170},
   date={2009},
   number={2},
   pages={915--939}
}

\bib{Meh1991}{book}{
   author={Mehta, M. L.},
   title={Random matrices},
   edition={2},
   publisher={Academic Press Inc.},
   place={Boston, MA},
   date={1991},
   pages={xviii+562}
}

\bib{PasShc1997}{article}{
   author={Pastur, L.},
   author={Shcherbina, M.},
   title={Universality of the local eigenvalue statistics for a class of
   unitary invariant random matrix ensembles},
   journal={J. Statist. Phys.},
   volume={86},
   date={1997},
   number={1-2},
   pages={109--147}
}

\bib{PasShc2008}{article}{
   author={Pastur, L.},
   author={Shcherbina, M.},
   title={Bulk universality and related properties of Hermitian matrix
   models},
   journal={J. Stat. Phys.},
   volume={130},
   date={2008},
   number={2},
   pages={205--250}
}

\bib{PasShc2011}{book}{
   author={Pastur, L.},
   author={Shcherbina, M.},
   title={Eigenvalue Distribution of Large Random Matrices},
   publisher={Amer. Math. Society},
   place={Providence, RI},
   date={2011},
   pages={632}
}

\bib{Shc2011}{article}{
   author={Shcherbina, M.},
   title={Orthogonal and Symplectic Matrix Models: Universality and Other Properties},
   journal={Comm. Math. Phys.},
   volume={307},
   date={2011},
   number={3},
   pages={761--790}
}

\bib{VV2009}{article}{
   author={Valk\'o, B.},
   author={Vir\'ag, B.},
   title={Continuum limits of random matrices and the Brownian carousel},
   journal={Invent. Math.},
   volume={177},
   date={2009},
   number={3},
   pages={463--508}
}

 \end{biblist}

\end{bibdiv}

\end{document}